\newcommand{\ud}{\mathrm{d}}
\title[Existence and regularity results for the inviscid PE in a channel]{\Small{Existence and regularity results for the inviscid primitive equations with lateral periodicity}}
\author[M. Hamouda, C. Jung and R. Temam]{Makram Hamouda$^{1,2}$, Chang-Yeol Jung$^{3}$ and Roger Temam$^{1}$}
\address{$^{1}$ The Institute for Scientific Computing and
Applied Mathematics, \\ Indiana University, 831 E. 3rd St., Rawles Hall,\\ Bloomington, IN 47405, USA}
\address{$^{2}$ University of Tunis El Manar, Facutly of Sciences of Tunis, Department of Mathematics,
Tunis, Tunisia.}
\address{$^3$ Department of Mathematical Sciences, School of Natural Science, Ulsan National Institute of Science and Technology, UNIST-gil 50, Ulsan 689-798, Republic of Korea}
\subjclass[2000]{} \keywords{}
 \newtheorem{thm}{Theorem}[section]
 \newtheorem{cor}{Corollary}[section]
 \newtheorem{lem}{Lemma}[section]
 \newtheorem{prop}[thm]{Proposition}
 \theoremstyle{remark}
 \newtheorem{rem}{Remark}[section]
 \numberwithin{equation}{section}
\newcommand{\en}{\Bbb N}
\newcommand{\er}{\Bbb R}
\begin{document}

\maketitle

%\centerline{\scshape  M. Hamouda, C.-Y. Jung and R. Temam}

\newcommand{\ep}{\varepsilon}

\begin{abstract}
The article is devoted to prove the existence and regularity of the solutions
of the $3D$ inviscid Linearized Primitive Equations (LPEs) in a channel with
lateral periodicity. This was assumed in a previous work \cite{HJT} which is concerned with the boundary layers generated by the corresponding viscous problem. Although the equations under investigation here are of hyperbolic type, the standard methods do not apply because of the specificity of the hyperbolic system. A set of \textit{non-local} boundary conditions for the inviscid LPEs has to be imposed at the top and bottom of the channel making thus the system well-posed.
\end{abstract}

\section{Introduction}
Following the same linearization process, as presented in \cite{rtt08}, we are interested in this article in the existence and uniqueness of the solutions of the inviscid Linearized Primitive Equations (LPEs) of the ocean that we state as below:
\begin{align}\label{model0}
\left\{%
\begin{array}{l}
u_t + \bar{U}_0u_x - f v + \phi_x = F_u,\\
v_t + \bar{U}_0 v_x + f u + \phi_y   =
F_v,\\
\psi_t + \bar{U}_0\psi_x + N^2w  = F_{\psi},\\
\phi_z = \psi,\\
u_x + v_y + w_z = 0.
\end{array}%
\right.
\end{align}
Here the domain $\mathcal{M}$ is a cube of $\er^3$, {\it i.e.} $\mathcal{M} =
\mathcal{M}'\times(-L_3,0)$ with $\mathcal{M}'=(0,L_1)\times(0,L_2)$. Note that the external force ${\bf F}=(F_u, F_v, F_{\psi})^T$ may not have a significant physical meaning; it is introduced here for mathematical generalisation and to possibly handle non homogeneous boundary conditions which we do not consider in this article. The unknowns $(u, v, w), \psi$ and $\phi$ denote, respectively, the velocity of the fluid, the temperature and the pressure. The constant  $\bar{U}_0 > 0$ is the first component of the uniform stratified velocity flow $(\bar{U}_0,0,0)$ around which the (full) nonlinear PEs are linearized; see \cite{rtt08} for more details. The function  $f = f(y)$ is the Coriolis parameter. Throughout this article, we will assume that $f$ is independent of $y$ and thus constant.\\

Equations (\ref{model0}) will be supplemented with lateral boundary conditions that we will discuss at length. Furthermore, as in \cite{rtt08}, the following boundary conditions are imposed at top and bottom:
\begin{equation}\label{z-bc}
(\frac{\partial u}{\partial z},\frac{\partial v}{\partial
z},\psi,w) = 0\ \text{at} \ z=0, -L_3;
\end{equation}
see \cite{rtt08} for more details.

It is now well-known that no set of {\it local} boundary conditions can guarantee the well-posedness of (\ref{model0}), see \cite{os} and \cite{TT}. However, many choices of  {\it nonlocal} boundary conditions are possible. In view of the hyperbolic character of the system (\ref{model0}), the authors considered in \cite{rtt08} a set of nonlocal boundary conditions inherited from a normal modal decomposition and then proved the existence and uniqueness of the solution of (\ref{model0}). Here, as we will see later, the boundary conditions are different from those in \cite{rtt08}; this justifies the study of the existence for the system (\ref{model0}). Indeed, we will adopt the boundary conditions suggested by the boundary layer analysis for the viscous PEs on a cube with lateral periodicity as in \cite{HJT}; see also \cite{HJT09}. Moreover, the article complements the study carried out in \cite{HJT} in which the existence and regularity of the inviscid solution (the limit of the viscous solutions) was assumed.

The issue of lateral boundary conditions for the primitive equations and related equations has been addressed in a number of articles; see e.g. besides the already quoted articles, \cite{363, chghagre, GHT1, ht07, HT08, JT05, pete2, Raymond, sltt}. We recall that the theory of the primitive equations has been initiated in \cite{ltw192, ltw292} and further developed by many authors, in particular in \cite{ct07,ko}. Many other singular perturbation problems and related issues are considered in e.g. \cite{chghagre, casa, tewa02, tewa2}.

The article is organized as follows. First, in Section  \ref{sec2}, we decompose the solutions of (\ref{model0}) in a modal basis with respect to the vertical direction $Oz$. Two kind of modes are then considered, the zero mode and the nonzero modes. Then, Section  \ref{sec3} is devoted to the study of the existence for the nonzero modes. However, the existence result of the zero mode solution is detailed in Section \ref{sec-mode0}. The results obtained in Section \ref{sec-mode0} are mainly inspired from \cite{CST09} with the necessary changes due to the periodicity condition considered in this article.  Therefore, based on the existence results of each mode, we prove in Section \ref{sec4} the existence of the (global) solution of (\ref{model0}). Finally, in Section \ref{sec6}, we state and prove the regularity results which were assumed in the boundary layer analysis study carried out in \cite{HJT}.

\section{The modal decomposition and the boundary conditions}\label{sec2}
In this section we start by giving an equivalent modal system to (\ref{model0}). Then, we aim to study the existence and uniqueness of the solution to each of the modal equations when they are associated, as we will see later on, with the appropriate boundary conditions. \\
We consider as in \cite{TT} the following modal decomposition:
\begin{subequations}\label{modal_form}
\begin{align}
(u,v,\phi) &= \sum_{n \geq
0}\mathcal{U}_n(z)(u_n,v_n,\phi_n)(x,y,t),\\
(w,\psi) &= \sum_{n \geq 1}\mathcal{W}_n(z)(w_n,\psi_n)(x,y,t);\
\text{note $w_0=\psi_0=0$},
\end{align}\label{modal_form_b}
\end{subequations}
where $\mathcal{U}_0=\frac{1}{\sqrt{L_3}}$,
$\mathcal{U}_n=\sqrt{\frac{2}{L_3}}\cos(\lambda_n z)$ and
$\mathcal{W}_n=\sqrt{\frac{2}{L_3}}\sin(\lambda_n z)$\footnote{These functions are unique up to some multiplicative constants and are deduced from the boundary conditions in the $z$ direction, namely (\ref{z-bc}).}, and the
frequencies $\lambda_n$ are given by:
\begin{align}
\lambda_n = \frac{n\pi}{L_3}, ~~~n \in \en.
\end{align}
The modal equations for $(u_n,v_n,\psi_n,\phi_n,w_n)$ are then given in $\mathcal{M}'$ by
\begin{align}\label{mode_zero}
\left\{%
\begin{array}{l}
u_{0t} + \bar{U}_0u_{0x} - f v_0 + \phi_{0x}  = F_{u_0},\\
v_{0t} + \bar{U}_0 v_{0x} + f u_0 + \phi_{0y}  =F_{v_0},\\
u_{0x}+v_{0y} = 0,
\end{array}%
\right.
\end{align}
for $n=0$, and,
\begin{align}\label{moden}
\left\{%
\begin{array}{l}
u_{nt} + \bar{U}_0u_{nx} - f v_n + \phi_{nx} = F_{u_n},\\
v_{nt} + \bar{U}_0 v_{nx} + f u_n + \phi_{ny} =F_{v_n},\\
\psi_{nt} + \bar{U}_0\psi_{nx} + N^2w_n = F_{\psi_n},\\
\phi_n = -\lambda_n^{-1}\psi_n,\\
w_n = -\lambda_n^{-1}(u_{nx}+v_{ny}),
\end{array}%
\right.
\end{align}
for $n \geq 1$.
\begin{rem}\label{rem21}
Thanks to (\ref{moden})$_{4,5}$, the solutions $\phi_n$ and $w_n$ are deduced respectively from $\psi_n$ and $u_n, v_n$. Hence, we will omit these quantities hereafter in our study and we will denote by $U_n:=(u_n,v_n,\psi_n)^T, n \ge 1,$ (resp. $U_0:=(u_0,v_0)^T)$ the solution of (\ref{moden}) (resp. (\ref{mode_zero})) when this solution exists. Hereafter, we denote by $\mathcal{A}_n$, for $n \ge 1$, the differential operators acting on $U_n:=(u_n,v_n,\psi_n)$,
\begin{align}\label{operator-A}
\mathcal{A}_n U_n=\left\{%
\begin{array}{l}
\bar{U}_0u_{nx} - f v_n -\frac{1}{\lambda_n}\psi_{nx},\\
\bar{U}_0 v_{nx} + f u_n -\frac{1}{\lambda_n}\psi_{ny},\\
\bar{U}_0\psi_{nx} - \frac{N^2}{\lambda_n} (u_{nx}+v_{ny}).
\end{array}%
\right.
\end{align}
The complete definition of the operator $\mathcal{A}_n$ will be introduced later on in this article (see (\ref{operator-A}) and (\ref{domain-A}) below).
\end{rem}
Now, we can state the different boundary conditions that we will associate with the equations (\ref{mode_zero}) and (\ref{moden}). As mentioned before, these boundary conditions are borrowed from \cite{HJT}.
We will hereafter consider two types of modes; the zero mode and the (nonzero) $n$th mode for $n\ge 1$. Note that the nonzero modes are all supercritical (see \cite{HJT09} for more details about this notion).

First of all we give the initial data which are valid for all modes:
\begin{equation}\label{ini_cond_mode}
(u_n,v_n,\psi_n) =
(\tilde{u}_{n},\tilde{v}_{n},\tilde{\psi}_{n})(x,y)\ \text{at
$t=0$}, \quad \forall \ n \in \en;
\end{equation}
here we note that $\psi_0 \equiv 0$ since $\mathcal{W}_0=0$.

In a first step, we will focus on the existence of solution for the n{\it th} modes $n \ge 1$. For these modes, the boundary conditions at $y=0,L_2$ read as follows (see \cite{HJT}):
\begin{align}
&\zeta_n(x,L_2,t)=(v_n+N^{-1}\psi_n)(x,L_2,t) = 0,\label{zeta}\\
&\chi_n(x,0,t)=(v_n-N^{-1}\psi_n)(x,0,t) = 0,\label{xhi}
\end{align}
where
\begin{align}\label{combi1}
\zeta_n = v_n + N^{-1}\psi_n,\quad \text{and} \quad \chi_n = v_n - N^{-1}\psi_n, \quad \forall \ n\ge 1.
\end{align}
Moreover, we supplement these conditions with a periodicity condition in the $x$ direction:
\begin{equation}\label{per_bdry_x}
(u_n, v_n, \psi_n) (0,y,t) = (u_n, v_n, \psi_n) (L_1,y,t).
\end{equation}
%{\color{red}
In a second step, we will state and give the proof of the existence of solution for the mode zero which is also called the Barotropic mode of the primitive equations. In view of the particularity of the the zero mode equations, the boundary conditions of this mode are inspired from the usual boundary conditions that we associate with the Euler type equations with rotating term, see e.g. \cite{ht07}, \cite{CST09} or \cite{tewa2}. Namely, we impose that
\begin{align}
&u_0(0,y,t)=u_0(L_1,y,t), \ \forall \ y \in (0,L_2), \ \forall \ t \in (0,T),\label{mode_zero_limit_bdry}\\
&v_0(0,y,t)=v_0(L_1,y,t),\ \forall \ y \in (0,L_2),  \ \forall \ t \in (0,T),\label{mode_zero_limit_bdry2}\\
&v_0 = 0\ \text{at $y=0,L_2$,}\ \forall \ x \in (0,L_1),  \ \forall \ t \in (0,T).\label{mode_zero_limit_bdry1}
\end{align}
%$v_0|_{x=0} =v_0|_{x=L_1}$
The zero mode is in fact modeling a rotating fluid and though its corresponding equations look simple, they are not and the study of the existence will be deduced from \cite{CST09}, see also \cite{ht07}. The approach used for treating the existence for the mode zero in \cite{CST09} is different since the situation in the present article is slightly different because of the periodicity in $x$.\\

We now have all the necessary initial and boundary conditions to prove the existence of the $n${\it th} mode solution for any $n \in \en$. This will be done in the next sections.
%%%%%%%%%%%%
\section{Existence of the $n${\it th} mode solution, $n \ge 1$}\label{sec3}
In this section we will rewrite the limit problem of the $n${\it th} mode for which we aim to prove the existence using the linear semi-group theory and the Hille-Philipps-Yoshida Theorem. For that purpose, we need to introduce the function spaces in which the solution is shown to exist. Moreover, some regularity results are necessary in order to give a sense to the boundary values cited above and to argue the regularity results assumed in the asymptotic analysis carried out in  \cite{HJT}.

{\it In what follows in this section, all the unknown functions and operators depend on $n$}. However, the subscript $n$ will be omitted, and it will be reintroduced again when necessary. In particular,  we denote by $(u_n, v_n, \psi_n)=(u, v, \psi)$ and so on for the other quantities when we need to recall them.

Using now the notations and remarks cited before, we write the limit problem (\ref{moden}) as follows:
\begin{align}\label{mode-n}
\left\{%
\begin{array}{l}
u_{t} + \bar{U}_0u_{x} - f v -\frac{1}{\lambda}\psi_{x} = F_{u},\\
v_{t} + \bar{U}_0 v_{x} + f u -\frac{1}{\lambda}\psi_{y} =F_{v},\\
\psi_{t} + \bar{U}_0\psi_{x} - \frac{N^2}{\lambda} (u_{x}+v_{y}) = F_{\psi},
\end{array}%
\right.
\end{align}
which we supplement with the following boundary conditions:
\begin{align}\label{bc}
\left\{%
\begin{array}{l}
\zeta(x,L_2,t):=(v+N^{-1}\psi)(x,L_2,t) = 0,\\
\chi(x,0,t):=(v-N^{-1}\psi)(x,0,t) = 0,\\
(u, v, \psi) (0,y,t) = (u, v, \psi) (L_1,y,t).
\end{array}%
\right.
\end{align}

Then, we rewrite (\ref{mode-n})-(\ref{bc}) in a suitable Hilbert space $H \subset {\bf
L^2(\mathcal{M'})}:=L^2(\mathcal{M'})^3$, in the following abstract form:
\begin{align}\label{abstract-pb}
\left\{%
\begin{array}{l}
\displaystyle{\frac{\ud U}{\ud t}+A U= F,\quad \forall \,t > 0,}\vspace{.2cm}\\
U(0) = \widetilde{U},
\end{array}%
\right.
\end{align}
where $U : \er_+ \rightarrow H$, and $A$ is a linear unbounded operator in $H$ with domain $D(A) \subset H$ which will be specified later on in this section (see (\ref{domain-A})). The space $H$ is simply given by:
\begin{equation}\label{spaceH}
H={\bf L^2_{per}(\mathcal{M'})}:=\left\{ U \in {\bf
L^2(\mathcal{M'})} ~\textnormal{s.t.}~ U \ \text{is ~$L_1-$periodic in} \ x\right\}.
\end{equation}

In the following, we define the spaces $\mathcal{X}$ and $D(A)$. For that purpose, we first endow the space ${\bf
L^2(\mathcal{M'})}$ with the scalar product and norm:
\begin{align}\label{inner_all}
(U,U^{*}) = \int_{\mathcal{M'}} (u u^{*} + v v^{*} +
\frac{1}{N^2}\psi \psi^{*})d\mathcal{M'},\quad |U|_{\bf L^2} =
(U,U)^\frac{1}{2}.
\end{align}
We then introduce the auxiliary space for which we prove a trace theorem that we will use later on to define $D(A)$:
\begin{equation}
\mathcal{X}(\mathcal{M'}):=\left\{ U \in H ~\textnormal{s.t.}~ \mathcal{\bar{A}} U \in {\bf
L^2(\mathcal{M'})}\right\},
\end{equation}
endowed with its natural Hilbert norm $(|U|^2_{\bf L^2(\mathcal{M'})} + |\mathcal{\bar{A}} U|^2_{\bf L^2(\mathcal{M'})})^{1/2}$.\\
Here, $\mathcal{\bar{A}}=\mathcal{A}+\mathcal{B}$ denotes the differential operator $\mathcal{\bar{A}}=(\mathcal{\bar{A}}_1, \mathcal{\bar{A}}_2, \mathcal{\bar{A}}_3)$ operating on $U=(u,v,\psi)$ as follows:
\begin{align}\label{operator-A}
\mathcal{A} U =\left\{%
\begin{array}{l}
\bar{U}_0u_{x} -\frac{1}{\lambda}\psi_{x},\\
\bar{U}_0 v_{x}  -\frac{1}{\lambda}\psi_{y},\\
\bar{U}_0\psi_{x} - \frac{N^2}{\lambda} (u_{x}+v_{y}),
\end{array}%
\right.
\quad \text{and} \quad
\mathcal{B} U =\left\{%
\begin{array}{l}
- f v,\\
f u,\\
0.
\end{array}%
\right.
\end{align}
Note that $\mathcal{B}$ is a linear continuous operator on ${\bf
L^2(\mathcal{M'})}$. Hence, it is sufficient to prove the Hille-Phillips-Yosida Theorem for $\mathcal{A}$ which involves, among other results, to show the positivity of $\mathcal{A}$ and its adjoint.
However, some auxiliary steps are necessary in order to make the computations well-defined. First, we will introduce the space $D(A)$, the domain of $A$, as follows:
\begin{equation}\label{domain-A}
D(A)=\left\{U=(u,v,\psi) \in  {\bf L^2(\mathcal{M'})} ~\textnormal{s.t.}~ \mathcal{A} U \in  {\bf L^2(\mathcal{M'})} ~\textnormal{and}~ U \ \text{satisfies} \ (\ref{bc})_{1,2} \right\}.
\end{equation}

\begin{rem}
We recall here that all the function spaces defined above; $H, \mathcal{X}(\mathcal{M'})$ and $D(A)$, are in fact dependent of $n$ ($n \ge 1$) and should be denoted respectively $H^n, \mathcal{X}^n(\mathcal{M'})$ and $D(A^n)$ when it is necessary to reintroduce the superscript $n$, especially in Section \ref{sec4}. The function spaces for $n=0$ are borrowed from \cite{CST09} and will be redefined in Section \ref{sec-mode0}.
\end{rem}

We now state and prove several trace results for $U=(u,v,\psi)$ giving thus a sense to the definition of $D(A)$. We have the following lemma.
\begin{lem}\label{lem-sapce xi}
For all $U=(u,v,\psi) \in \mathcal{X}(\mathcal{M'})$, the traces of $\psi$ and $v$ are well-defined at $y=0,L_2$. Moreover, the function $U=(u,v,\psi)$ is periodic in $x$ and its trace is well-defined at $x=0, L_1$.
\end{lem}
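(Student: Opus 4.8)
The plan is to build the traces of $\psi$ and $v$ at $y=0,L_2$ (and the periodicity traces at $x=0,L_1$) from suitable combinations of the components of $\mathcal{A}U$, exactly as in the classical trace theorem for the space $\{U\in L^2:\ \mathrm{div}\,U\in L^2\}$ or, more precisely, the Friedrichs-type trace theorem for symmetric hyperbolic operators. The key observation is that although $\mathcal{A}$ involves $x$- and $y$-derivatives of all three unknowns, the only transversal (i.e. $y$-) derivatives that appear are $\psi_y$ in the second component $\mathcal{A}_2 U$ and $v_y$ in the third component $\mathcal{A}_3 U$. Hence the combinations that can be controlled in the $y$-direction are precisely the ones governing $v$ and $\psi$, which is why the boundary conditions $(\ref{bc})_{1,2}$ are stated in terms of $\zeta=v+N^{-1}\psi$ and $\chi=v-N^{-1}\psi$.

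First I would establish the result for smooth $U$ (say $U\in C^\infty(\overline{\mathcal{M}'})$, $L_1$-periodic in $x$) and prove the a priori trace estimate
\begin{align}\label{trace-est-plan}
|v|^2_{L^2(\Gamma_y)} + |\psi|^2_{L^2(\Gamma_y)} + |U|^2_{L^2(\Gamma_x)} \le C\left(|U|^2_{\bf L^2(\mathcal{M'})} + |\mathcal{A}U|^2_{\bf L^2(\mathcal{M'})}\right),
\end{align}
where $\Gamma_y=\{y=0,L_2\}$ and $\Gamma_x=\{x=0,L_1\}$ and $C=C(n)$. The mechanism is integration by parts: pairing $\mathcal{A}U$ with $U$ in the inner product $(\cdot,\cdot)$ of $(\ref{inner_all})$, the skew-symmetric bulk terms cancel (this is the computation that will reappear when proving positivity of $\mathcal{A}$), leaving only boundary integrals over $\partial\mathcal{M}'$. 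The $x$-boundary terms vanish by periodicity of $U$, and the $y$-boundary terms collect into an expression of the form $\int_{\Gamma_y} \pm\tfrac{1}{\lambda}\,v\,\psi$, i.e. $\tfrac{1}{2\lambda}(|\zeta|^2-|\chi|^2)$ up to constants and the correct signs at $y=0$ versus $y=L_2$. To separate $|v|^2$ and $|\psi|^2$ individually (rather than only the indefinite combination $v\psi$), I would instead test $\mathcal{A}_2 U=\bar U_0 v_x-\tfrac1\lambda\psi_y$ against a function linear in $y$ times $\psi$, and $\mathcal{A}_3 U$ against a linear-in-$y$ function times $v$, to extract $\int_{\mathcal{M}'}\psi_y\,\theta(y)\,\psi$ and $\int_{\mathcal{M}'}v_y\,\theta(y)\,v$ where $\theta$ is chosen to localize at one face; then $\psi^2$ and $v^2$ at that face appear with a definite sign and are bounded by the right-hand side of $(\ref{trace-est-plan})$. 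The periodicity trace at $x=0,L_1$ is handled similarly using the first component (or any component), since each $\mathcal{A}_i U$ contains an $x$-derivative of exactly one unknown. Once $(\ref{trace-est-plan})$ holds on smooth functions, I would invoke density of $C^\infty(\overline{\mathcal{M}'})\cap H$ in $\mathcal{X}(\mathcal{M}')$ — which is itself a standard Friedrichs-mollification argument, mollifying in $x$ globally (compatible with periodicity) and using translations/cutoffs in $y$ — to extend the trace operators $U\mapsto v|_{\Gamma_y}$, $U\mapsto\psi|_{\Gamma_y}$ and $U\mapsto U|_{\Gamma_x}$ by continuity to all of $\mathcal{X}(\mathcal{M}')$, with values in $L^2$ of the respective boundary pieces; periodicity of the trace at $x=0,L_1$ passes to the limit because it holds for every approximant.

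The main obstacle I anticipate is the density step, i.e. showing $C^\infty(\overline{\mathcal{M}'})\cap H$ is dense in $\mathcal{X}(\mathcal{M}')$ for the graph norm of $\mathcal{\bar A}$. Mollification in the periodic variable $x$ is harmless and commutes with $\mathcal{A}$; the delicate point is regularization up to the faces $y=0,L_2$, where one must translate the function into the domain and mollify without losing control of $\mathcal{A}U$ in $L^2$ — this is exactly the place where Friedrichs' commutator lemma is needed to show that $[\mathcal{A},\rho_\delta *\,\cdot\,]U\to 0$ in $L^2$, using that $\mathcal{A}$ has constant (hence smooth) coefficients here, which makes the commutator estimate clean. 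A secondary, purely bookkeeping, difficulty is getting the signs in the boundary integral right so that the two faces $y=0$ and $y=L_2$ genuinely produce estimates for $\chi$ and $\zeta$ respectively, consistent with the way the boundary conditions $(\ref{bc})_{1,2}$ are posed (one condition at each face). Since $\mathcal{B}$ is bounded on ${\bf L^2(\mathcal{M'})}$ by the remark following $(\ref{operator-A})$, $\mathcal{\bar A}U\in{\bf L^2}$ iff $\mathcal{A}U\in{\bf L^2}$, so there is no loss in carrying out the integration by parts with $\mathcal{A}$ alone and adding back the $\mathcal{B}$ contribution at the end.
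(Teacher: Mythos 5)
Your route (an a priori $L^2$-trace estimate for smooth functions by integration by parts, then density of smooth functions in the graph norm via Friedrichs mollification) is genuinely different from the paper's, and as written it has real gaps. First, the key estimate is not obtained by the steps you describe: testing $\mathcal{A}_2U=\bar U_0v_x-\tfrac1\lambda\psi_y$ against $\theta(y)\psi$ produces, besides the good term $\tfrac1{2\lambda}\int\theta\,\partial_y(\psi^2)$, the cross term $\bar U_0\int v_x\,\theta\,\psi$, which is \emph{not} bounded by $|U|_{\bf L^2}+|\mathcal{A}U|_{\bf L^2}$; to absorb it you must integrate by parts in $x$ and then eliminate $\psi_x$ by solving the $2\times2$ system formed by $(\mathcal{A}U)_1$ and $(\mathcal{A}U)_3$ for $u_x,\psi_x$ modulo $v_y$, which is possible only because the determinant $\bar U_0^2-N^2/\lambda_n^2$ does not vanish --- i.e.\ the supercriticality of the modes $n\ge1$, which your argument never invokes but on which both your $y$-face estimate and the positive definiteness of the $x$-flux form $\bar U_0(u^2+v^2+N^{-2}\psi^2)-\tfrac2\lambda u\psi$ depend. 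Relatedly, your claim that ``each $\mathcal{A}_iU$ contains an $x$-derivative of exactly one unknown'' is false: $(\mathcal{A}U)_1$ and $(\mathcal{A}U)_3$ each contain both $u_x$ and $\psi_x$, and pairing $\mathcal{A}U$ with $U$ gives no information at $x=0,L_1$ (those boundary terms cancel by periodicity), so the $L^2(\Gamma_x)$ part of your estimate is unsupported as stated. Second, the density of smooth functions in the graph norm --- which you yourself flag as the main obstacle --- is only asserted; it is delicate here because the $y$-faces are characteristic in the $u$-direction (degenerate boundary matrix) and the rectangle has corners, so this step cannot be waved through.

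For comparison, the paper's proof is much softer and avoids all of this: it only claims traces in negative-order spaces. Since $U\in{\bf L^2}$, one has $v_x\in L^2_y(0,L_2;H^{-1}_x(0,L_1))$, so $(\mathcal{A}U)_2\in L^2$ gives $\psi_y\in L^2_y(H^{-1}_x)$ and hence $\psi\in\mathcal{C}_y([0,L_2];H^{-1}_x)$, defining its trace at $y=0,L_2$; the same reading of $(\mathcal{A}U)_3$ gives the trace of $v$. For the $x$-traces one solves $(\mathcal{A}U)_1,(\mathcal{A}U)_3$ for $u_x,\psi_x$ modulo $L^2_x(H^{-1}_y)$ (again using $\bar U_0^2\ne N^2/\lambda_n^2$) and uses $(\mathcal{A}U)_2$ for $v_x$, yielding $u,v,\psi\in\mathcal{C}_x([0,L_1];H^{-1}_y)$. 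No trace inequality and no density theorem are needed at this stage (regularization appears only later, in the positivity proof). Your plan, if completed with the missing ingredients above, would give a stronger conclusion ($L^2$ boundary traces), but as it stands it does not yet prove the lemma.
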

\begin{proof}
For $U=(u,v,\psi) \in \mathcal{X}(\mathcal{M'}) \subset L^2_x(0,L_1; (L^2_y(0,L_2))^3)$, we have $\mathcal{A} U \in {\bf
L^2(\mathcal{M'})}$. In particular, we infer that $U_x$ belongs to $L^2_x(0,L_1; (H^{-1}_y(0,L_2))^3$. Now, since $(\mathcal{A} U)_2 \in L^2(\mathcal{M'})$, we conclude that $\psi_y \in L^2_y(0,L_2; H^{-1}_x(0,L_1)$ and consequently $\psi \in \mathcal{C}_y([0,L_2]; H^{-1}_x[0,L_2]$. Hence, the trace of $\psi$ is well-defined at $y=0,L_2$. Similarly, from $(\mathcal{A} U)_3 \in L^2(\mathcal{M'})$, we deduce that $v \in \mathcal{C}_y([0,L_2]; H^{-1}_x(0,L_1))$ and its trace is also well-defined at $y=0,L_2$.\\
Now, combining $(\mathcal{A} U)_1 ~\textnormal{and}~ (\mathcal{A} U)_3 \in L^2(\mathcal{M'})$, we obtain that $\psi_x$ and $u_x$ belong to $L^2_x(0,L_1; H^{-1}_y(0,L_2)$ and then $\psi, u \in \mathcal{C}_x([0,L_1]; H^{-1}_y(0,L_2))$. Thus, the periodic boundary conditions in $x$ for $u$ and $\psi$ make sense since $U \in H$.\\
Finally, using   $(\mathcal{A} U)_3 \in L^2(\mathcal{M'})$ and the above results in this proof, we conclude that $v_x \in L^2_x(0,L_1; H^{-1}_y(0,L_2))$ and then $v \in \mathcal{C}_x([0,L_1]; H^{-1}_y(0,L_2))$ which guarantees the definition of the periodic boundary conditions for $v$.\\
This concludes the proof of the lemma.
\end{proof}

Now, thanks to Lemma \ref{lem-sapce xi}, the space $D(A)$ is well-defined. Its adjoint $A^*$ is classically defined as follows \cite{rudin}: its domain $$D(A^*)\!=\!\Big\{\!U^\sharp \in H,\ V \rightarrow (U^\sharp,A V)\ \text{is continuous on $D(A)$ for the norm of $H$}\!\Big\}.$$ It can be shown as in e.g. \cite{rtt08}, that $$D(A^{*})\!=\!\left\{\!U^\sharp=(u^\sharp,v^\sharp,\psi^\sharp) \!\in\!  {\bf L^2(\mathcal{M'})} \,\textnormal{s.t.}\, \mathcal{A^{*}} U^\sharp \!\in\!  {\bf L^2(\mathcal{M'})} \,\textnormal{and}\, U^\sharp \ \text{satisfies} \ (\ref{bc-a*})\!\right\},$$
and for $U^\sharp \in D(A^*)$, $A^*U^\sharp = \mathcal{A}^*U^\sharp$. Here
%we similarly proceed with $D(A^{*})$ the domain of $A^{*}$, the adjoint operator of $A$. The operator $A^{*}$, acting on $\tilde{U}=(\tilde{u},\tilde{v},\tilde{\psi}) \in D(A^{*})$, where $D(A^{*})$ will be introduced later on in this paragraph, is defined as follows:
\begin{align}\label{operator-A*}
\mathcal{A^{*}} U^\sharp =\left\{%
\begin{array}{l}
-\bar{U}_0 u^\sharp_{x} + \frac{1}{\lambda} \psi^\sharp_{x},\\
-\bar{U}_0 v^\sharp_{x} + \frac{1}{\lambda} \psi^\sharp_{y},\\
-\bar{U}_0 \psi^\sharp_{x} + \frac{N^2}{\lambda} (u^\sharp_{x}+v^\sharp_{y}).
\end{array}%
\right.
\end{align}
We associate with the operator $\mathcal{A^{*}}$ the following boundary conditions:
\begin{align}\label{bc-a*}
\left\{%
\begin{array}{l}
(v^\sharp-N^{-1} \psi^\sharp)(x,L_2,t) = 0,\\
(v^\sharp+N^{-1} \psi^\sharp)(x,0,t) = 0,\\
(u^\sharp,v^\sharp,\psi^\sharp) (0,y,t) = (u^\sharp,v^\sharp,\psi^\sharp) (L_1,y,t).
\end{array}%
\right.
\end{align}

\subsection{The positivity of $\mathcal{A}$ and $\mathcal{A^{*}}$}

In this paragraph, we will prove the positivity of the operators $\mathcal{A}$ and $\mathcal{A^{*}}$. This will allow us to apply the Hille-Phillips-Yosida Theorem and conclude the existence of the mode solutions for $n \ge 1$. Since we do not have yet proved the necessary regularity results for the mode solutions, we proceed by regularization and we start by showing the positivity of $\mathcal{A}$.
\begin{lem}\label{pos-A}
The operator $A$ is positive on $D(A)$, $(AU,U) \geq 0 \ \forall \, U \in D(A)$.
\end{lem}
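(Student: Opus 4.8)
The plan is to compute $(AU,U)$ directly for smooth $U \in D(A)$ using the inner product \eqref{inner_all}, integrate by parts, and check that all boundary terms are nonnegative; the general case then follows by a density/regularization argument. First I would write, recalling $A = \mathcal{A} + \mathcal{B}$ and that $\mathcal{B}$ is skew-symmetric on ${\bf L^2(\mathcal{M'})}$ (since $(\mathcal{B}U,U) = \int_{\mathcal{M'}}(-fvu + fuv)\,d\mathcal{M'} = 0$), that it suffices to treat $(\mathcal{A}U,U)$. Plugging \eqref{operator-A} into \eqref{inner_all} gives
\begin{align*}
(\mathcal{A}U,U) = \int_{\mathcal{M'}} \Bigl( \bar U_0 u u_x - \tfrac{1}{\lambda}\psi_x u + \bar U_0 v v_x - \tfrac{1}{\lambda}\psi_y v + \tfrac{1}{N^2}\bar U_0 \psi \psi_x - \tfrac{1}{\lambda N^2}\psi(u_x + v_y) \Bigr)\,d\mathcal{M'}.
\end{align*}
The three $\bar U_0$ terms are each of the form $\bar U_0 \int g g_x$, which integrates to $\tfrac{\bar U_0}{2}\int_{y}[g^2]_{x=0}^{x=L_1}\,dy = 0$ by the $x$-periodicity in \eqref{bc}. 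It remains to handle the coupling terms $-\tfrac{1}{\lambda}\int(\psi_x u + \psi_y v) - \tfrac{1}{\lambda N^2}\int \psi(u_x + v_y)$.

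The key manipulation is to integrate by parts in the terms $-\tfrac{1}{\lambda N^2}\int \psi u_x$ and $-\tfrac{1}{\lambda N^2}\int \psi v_y$. In $x$ the boundary contributions vanish by periodicity, so $-\tfrac{1}{\lambda N^2}\int \psi u_x = \tfrac{1}{\lambda N^2}\int \psi_x u$. In $y$ we pick up a genuine boundary term: $-\tfrac{1}{\lambda N^2}\int \psi v_y = \tfrac{1}{\lambda N^2}\int \psi_y v - \tfrac{1}{\lambda N^2}\int_{0}^{L_1}\bigl[\psi v\bigr]_{y=0}^{y=L_2}\,dx$. Rescaling the first pair of coupling terms we can write $-\tfrac{1}{\lambda}\int(\psi_x u + \psi_y v)$; comparing, the interior coupling terms do \emph{not} cancel against each other because of the factor $N^2$ discrepancy — so instead I would integrate by parts the other way in the first pair, writing $-\tfrac{1}{\lambda}\int \psi_x u = \tfrac{1}{\lambda}\int \psi u_x$ (periodicity, no boundary term) and $-\tfrac{1}{\lambda}\int \psi_y v = \tfrac{1}{\lambda}\int \psi v_y - \tfrac{1}{\lambda}\int_{0}^{L_1}\bigl[\psi v\bigr]_{y=0}^{y=L_2}\,dx$. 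The point is that exactly half of each coupling term should be integrated by parts so that the interior parts cancel; concretely, averaging the two representations of each term, the interior contributions from $-\tfrac{1}{\lambda}\int(\psi_x u + \psi_y v) - \tfrac{1}{\lambda N^2}\int\psi(u_x+v_y)$ should reduce to zero (when $N$ is a constant the symmetrized bilinear form has no interior part), leaving only the boundary term. I expect to arrive at
\begin{align*}
(\mathcal{A}U,U) = c\int_{0}^{L_1}\Bigl(\bigl[\psi v\bigr](x,0,t) - \bigl[\psi v\bigr](x,L_2,t)\Bigr)\,dx
\end{align*}
for an explicit positive constant $c$ (likely $c = \tfrac{1}{\lambda}\cdot\tfrac{1}{2}(1 + N^{-2})$ or similar — the precise value will come out of the symmetrization bookkeeping).

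The final step is to show this boundary expression is $\ge 0$ using the boundary conditions \eqref{bc}$_{1,2}$ built into $D(A)$: at $y=L_2$ we have $v + N^{-1}\psi = 0$, i.e. $v = -N^{-1}\psi$, so $\psi v = -N^{-1}\psi^2 \le 0$, hence $-[\psi v](x,L_2,t) = N^{-1}\psi^2 \ge 0$; at $y=0$ we have $v - N^{-1}\psi = 0$, i.e. $v = N^{-1}\psi$, so $[\psi v](x,0,t) = N^{-1}\psi^2 \ge 0$. Thus the integrand is pointwise nonnegative and $(AU,U) \ge 0$. The main obstacle is twofold: first, getting the integration-by-parts bookkeeping exactly right so that the interior terms cancel and the sign of the surviving boundary term is correctly pinned down (this is where a wrong sign or a stray factor would break the argument); second, justifying these computations for general $U \in D(A)$ rather than smooth $U$ — here I would invoke the trace results of Lemma \ref{lem-sapce xi} to make the boundary integrals meaningful and approximate $U$ by smooth periodic functions (or functions for which $\mathcal{A}U$ is smooth), passing to the limit using continuity of $\mathcal{A}$ and of the traces, exactly the regularization device the paragraph preceding the lemma announces. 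Finally, adding back $(\mathcal{B}U,U)=0$ gives $(AU,U)\ge 0$ for all $U\in D(A)$, completing the proof.
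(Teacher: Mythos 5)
Your overall strategy is the same as the paper's: regularize (the paper mollifies in $x$ only, which is compatible with the periodicity and with $D(A)$, and recovers the needed $y$-regularity of $v_\ep,\psi_\ep$ from the equations $\mathcal{A}U_\ep=\rho_\ep *_x F$), integrate by parts, use the boundary conditions $(\ref{bc})_{1,2}$ to identify a nonnegative boundary term, and pass to the limit using only $U_\ep\to U$ and $\mathcal{A}U_\ep\to\mathcal{A}U$ in ${\bf L^2}$. Your final sign check at $y=0,L_2$ is also exactly right. However, the central computation as you set it up contains a genuine error: pairing the third component of $\mathcal{A}U$, namely $\bar U_0\psi_x-\frac{N^2}{\lambda}(u_x+v_y)$, with the weight $\frac{1}{N^2}\psi$ from (\ref{inner_all}) gives the coupling term $-\frac{1}{\lambda}\psi(u_x+v_y)$, \emph{not} $-\frac{1}{\lambda N^2}\psi(u_x+v_y)$. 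The weight $1/N^2$ in the scalar product is chosen precisely so that the $N^2$ cancels; there is no ``factor $N^2$ discrepancy''. With the correct coefficient the coupling terms combine exactly into $-\frac{1}{\lambda}\int_{\mathcal{M'}}\bigl[\partial_x(u\psi)+\partial_y(v\psi)\bigr]$, whose interior part vanishes identically (no symmetrization or averaging is needed), leaving $(\mathcal{A}U,U)=\frac{1}{\lambda}\int_0^{L_1}\bigl[(v\psi)(y=0)-(v\psi)(y=L_2)\bigr]dx$, i.e.\ your undetermined constant is $c=1/\lambda$; this is exactly (\ref{posit-regular}), which by the boundary conditions equals $\frac{N}{4\lambda}\int_0^{L_1}\bigl[\zeta^2(y=0)+\chi^2(y=L_2)\bigr]dx\ge 0$.

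The symmetrization/averaging step you propose cannot repair the wrong coefficient: integration by parts merely rewrites the same number, so averaging two rewritings of one integral cannot make a genuinely nonzero interior contribution disappear. If the coefficients really were $1/\lambda$ on the $(\psi_x u+\psi_y v)$ pair and $1/(\lambda N^2)$ on the $\psi(u_x+v_y)$ pair, the interior terms would fail to cancel for $N\neq 1$ no matter how the bookkeeping is arranged, and the promised identity with an ``explicit positive constant $c$'' would not come out; deferring the value of $c$ to ``the symmetrization bookkeeping'' hides this inconsistency rather than resolving it. Once you correct the pairing, the symmetrization paragraph can be deleted entirely and your argument closes, coinciding with the paper's proof (Lemma \ref{pos-A}), including the limit passage from $U_\ep$ to $U$.
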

\begin{proof}
Given $U \in D(A)$, we approximate $U=(u,v,\psi)$ by  smooth functions $U_{\varepsilon}=(u_{\varepsilon},v_{\varepsilon},\psi_{\varepsilon})$ using a partial regularisation in the $x$ variable. More precisely, we first extend the functions $u,v,\psi$ to $\er_x$ (that is $\er_x \times (0, L_2)$), by periodicity in $x$, and we denote the extended functions by $\tilde{u},\tilde{v},\tilde{\psi}$. Then, for $\ep >0$, we introduce a regularizing function $\rho_{\ep}=\ep^{-1} \rho (x/\ep)$ where $\rho$ is such that
\begin{displaymath}
\rho \ge 0, \
\rho \in \mathcal{C}^{\infty}_c(\er) \ \text{and} \
\int_{\er}\rho (x) dx=1.
\end{displaymath}
We write $*_x$ the partial convolution with respect to $x$ and we call $u_{\ep},v_{\ep},\psi_{\ep}$ the restrictions to $\Omega$ of $\rho_{\ep}*_x\tilde{u},\rho_{\ep}*_x\tilde{v},\rho_{\ep}*_x\tilde{\psi}$. The functions $u_{\ep},v_{\ep},\psi_{\ep}$ belong to $L^2_y((0,L_2); \mathcal{C}^{\infty}([0,L_1]))$ and, as $\ep \to 0$, $u_{\ep}\to u,v_{\ep}\to v,\psi_{\ep}\to \psi$ in $L^2(\Omega)$.

Furthermore, if $u,v,\psi$ satisfy the system $\mathcal{A} U= F=(F_u, F_v, F_\psi)$, then by extension by periodicity, convolution with $\rho_{\ep}$ and restriction to $\Omega$, we find that:
\begin{equation}\label{system-conv}
\left\{%
\begin{array}{l}
\bar{U}_0u_{\ep x} -\frac{1}{\lambda}\psi_{\ep x}= F_{u_{\ep}}=\rho_{\ep}*_x F_{u},\\
\bar{U}_0 v_{\ep x}  -\frac{1}{\lambda}\psi_{\ep y}= F_{v_{\ep \ep}}=\rho_{\ep}*_x F_{v},\\
\bar{U}_0\psi_{\ep x} - \frac{N^2}{\lambda} (u_{\ep x}+v_{\ep y})= F_{\psi_{\ep}}=\rho_{\ep}*_x F_{\psi}.
\end{array}%
\right.
\end{equation}
Moreover, the functions  $u_{\ep},v_{\ep},\psi_{\ep}$ satisfy the boundary conditions (\ref{zeta}), (\ref{xhi}) and (\ref{per_bdry_x}), so that $U_{\ep} \in D(A)$ and
\begin{equation}\label{system-reg}
\mathcal{A} U_{\ep} = F_{\ep}=\rho_{\ep}*_x F.
\end{equation}
Now, using the equations (\ref{system-conv})$_{1,2}$, we see that $v_{\ep y}, \psi_{\ep y} \in L^2_y((0,L_2); \mathcal{C}_{per}^{\infty}([0,L_1]))$, and hence
\begin{equation}
v_{\ep}, \psi_{\ep} \in H^1_y((0,L_2); \mathcal{C}_{per}^{\infty}([0,L_1])).
\end{equation}
Finally, the functions $u_{\ep},v_{\ep},\psi_{\ep}$ are now regular and we can easily prove the positivity of the operator $A$ on the space $D(A)$. For that, it suffices to remember the definition of the scalar product (\ref{inner_all}) and use the boundary conditions (\ref{bc}); this yields for $U_{\ep} \in D(A)$:
\begin{eqnarray}\label{posit-regular}
(A U_{\ep}, U_{\ep}) &=& \frac{1}{\lambda} \int_0^{L_1}[v_{\ep} \psi_{\ep} (y=0)-v_{\ep} \psi_{\ep} (y=L_2)] \ud x \\
&=& \frac{N}{4\lambda} \int_0^{L_1}[\zeta^2_{\ep} (y=0)+\chi^2_{\ep} (y=L_2)] \ud x \ge 0,\nonumber
\end{eqnarray}
where $\chi_{\ep}, \zeta_{\ep}$ are defined as $\chi, \zeta$ in (\ref{zeta})-(\ref{xhi}) with $v, \psi$ replaced by $v_{\ep}, \psi_{\ep}$, respectively.\\
At the limit $\ep \to 0$, $U_{\ep}\to U$ in $\mathbf{L}^2(\Omega)$ as we already said and, similarly, by (\ref{system-reg}), $\mathcal{A} U_{\ep}\to \mathcal{A} U$ in $\mathbf{L}^2(\Omega)$. So (\ref{posit-regular}) gives the positivity of $\mathcal{A}$ by passing to the limit as $\ep \to 0$.\\
This achieves the proof of the lemma.
\end{proof}
We can now define the traces of the functions in $D(A^{*})$ similarly as we did for $D(A)$, and we proceed with the positivity of the adjoint operator $\mathcal{A^{*}}$.
\begin{lem}\label{pos-A*}
The operator $\mathcal{A^{*}}$, defined by (\ref{operator-A*}),  is positive on $D(A^{*})$.
\end{lem}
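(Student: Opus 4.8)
The plan is to imitate the argument of Lemma~\ref{pos-A}: regularize by partial convolution in the $x$ variable, integrate by parts, and exploit the fact that the boundary conditions (\ref{bc-a*}) for $\mathcal{A}^{*}$ are the ``mirror image'' of (\ref{bc}) — precisely so that the boundary term produced by the integration by parts has the favorable sign.

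First I would fix $U^\sharp=(u^\sharp,v^\sharp,\psi^\sharp)\in D(A^*)$ and set $G^\sharp:=\mathcal{A}^{*}U^\sharp\in\mathbf{L}^2(\mathcal{M'})$. Extending $U^\sharp$ and $G^\sharp$ to $\er_x\times(0,L_2)$ by $L_1$-periodicity in $x$, convolving with $\rho_\ep$ in the $x$ variable, and restricting, one obtains $U^\sharp_\ep$ and $G^\sharp_\ep=\rho_\ep*_x\tilde G^\sharp$. Since $\mathcal{A}^{*}$ has coefficients independent of $x$ it commutes with $*_x$, hence $\mathcal{A}^{*}U^\sharp_\ep=G^\sharp_\ep$; moreover $*_x$ preserves the $x$-periodicity and commutes with the $y$-traces, so $U^\sharp_\ep$ still satisfies (\ref{bc-a*}), i.e.\ $U^\sharp_\ep\in D(A^*)$, and $U^\sharp_\ep\to U^\sharp$, $\mathcal{A}^{*}U^\sharp_\ep\to\mathcal{A}^{*}U^\sharp$ in $\mathbf{L}^2(\mathcal{M'})$ as $\ep\to0$. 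Exactly as in Lemma~\ref{pos-A}, reading $\psi^\sharp_{\ep y}$ off the second component of $\mathcal{A}^{*}U^\sharp_\ep$ and $v^\sharp_{\ep y}$ off the third one shows that $v^\sharp_\ep,\psi^\sharp_\ep\in H^1_y((0,L_2);\mathcal{C}^{\infty}_{per}([0,L_1]))$, which makes the computations below licit and gives a genuine meaning to the $y$-traces.

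Next, for the regularized field I would compute $(\mathcal{A}^{*}U^\sharp_\ep,U^\sharp_\ep)$ directly from (\ref{operator-A*}) and the scalar product (\ref{inner_all}). The three terms carrying $\bar U_0$ add up to $-\tfrac{\bar U_0}{2}\int_{\mathcal{M'}}\partial_x\big(|u^\sharp_\ep|^2+|v^\sharp_\ep|^2+N^{-2}|\psi^\sharp_\ep|^2\big)\,\ud\mathcal{M'}$, which vanishes by $x$-periodicity; the remaining terms combine into $\tfrac1\lambda\int_{\mathcal{M'}}\big(\partial_x(u^\sharp_\ep\psi^\sharp_\ep)+\partial_y(v^\sharp_\ep\psi^\sharp_\ep)\big)\,\ud\mathcal{M'}$, whose $\partial_x$-part again vanishes by periodicity, leaving
\begin{equation*}
(\mathcal{A}^{*}U^\sharp_\ep,U^\sharp_\ep)=\frac1\lambda\int_0^{L_1}\big[v^\sharp_\ep\psi^\sharp_\ep(y=L_2)-v^\sharp_\ep\psi^\sharp_\ep(y=0)\big]\,\ud x .
\end{equation*}
Using (\ref{bc-a*}), namely $v^\sharp_\ep=N^{-1}\psi^\sharp_\ep$ at $y=L_2$ and $v^\sharp_\ep=-N^{-1}\psi^\sharp_\ep$ at $y=0$, this becomes
\begin{equation*}
(\mathcal{A}^{*}U^\sharp_\ep,U^\sharp_\ep)=\frac1{N\lambda}\int_0^{L_1}\big[(\psi^\sharp_\ep)^2(y=L_2)+(\psi^\sharp_\ep)^2(y=0)\big]\,\ud x=\frac{N}{4\lambda}\int_0^{L_1}\big[(\zeta^\sharp_\ep)^2(y=L_2)+(\chi^\sharp_\ep)^2(y=0)\big]\,\ud x\ \ge\ 0,
\end{equation*}
where $\zeta^\sharp_\ep=v^\sharp_\ep+N^{-1}\psi^\sharp_\ep$ and $\chi^\sharp_\ep=v^\sharp_\ep-N^{-1}\psi^\sharp_\ep$ (note the swap of the roles of $y=0$ and $y=L_2$ compared with Lemma~\ref{pos-A}, which matches the flipped conditions (\ref{bc-a*})).

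Finally, letting $\ep\to0$ and using $U^\sharp_\ep\to U^\sharp$, $\mathcal{A}^{*}U^\sharp_\ep\to\mathcal{A}^{*}U^\sharp$ in $\mathbf{L}^2(\mathcal{M'})$, the left-hand side converges to $(\mathcal{A}^{*}U^\sharp,U^\sharp)=(A^{*}U^\sharp,U^\sharp)$, which is therefore nonnegative, proving the lemma. The only genuinely delicate point — exactly as in Lemma~\ref{pos-A} — is the regularity bookkeeping: a priori $U^\sharp\in D(A^*)$ provides only $\mathcal{A}^{*}U^\sharp\in\mathbf{L}^2(\mathcal{M'})$, which does not by itself justify the integrations by parts nor even pointwise $y$-traces; it is the partial regularization in $x$, combined with the structure of the second and third components of $\mathcal{A}^{*}$, that upgrades $v^\sharp_\ep,\psi^\sharp_\ep$ to $H^1$ in $y$. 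Once that is secured, the favorable sign of the boundary term is automatic, thanks to the choice of (\ref{bc-a*}).
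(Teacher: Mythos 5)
Your proposal is correct and follows essentially the same route as the paper: regularize by partial convolution in $x$ exactly as in Lemma \ref{pos-A}, read the extra $y$-regularity of $v^\sharp_\ep,\psi^\sharp_\ep$ off the second and third components of $\mathcal{A}^{*}U^\sharp_\ep$, integrate by parts, use (\ref{bc-a*}) to get a nonnegative boundary term, and pass to the limit $\ep\to 0$. As a minor aside, your intermediate identity $(\mathcal{A}^{*}U^\sharp_\ep,U^\sharp_\ep)=\frac{1}{\lambda}\int_0^{L_1}\bigl[v^\sharp_\ep\psi^\sharp_\ep(y=L_2)-v^\sharp_\ep\psi^\sharp_\ep(y=0)\bigr]\ud x$ carries the correct sign (the paper's first displayed line in (\ref{posit-regular*}) has a typographical sign/constant slip), and both computations yield the same nonnegative conclusion.
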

\begin{proof}
Let $U^\sharp=(u^\sharp,v^\sharp,\psi^\sharp) \in {\bf L^2(\mathcal{M'})}$, with $\mathcal{A^{*}} U^\sharp \in {\bf L^2(\mathcal{M'})}$. Then, we may assume that the $U^\sharp$ is a smooth function. Indeed, it suffices for that to proceed by regularization as we did for the positivity of $\mathcal{A}$ in the proof of Lemma \ref{pos-A}, that is we can assume $u^\sharp,v^\sharp,\psi^\sharp$ and $v^\sharp_y,\psi^\sharp_y$ belong to $L^2_y((0,L_2); \mathcal{C}^{\infty}([0,L_1])$.
Hence, using the boundary conditions (\ref{bc-a*}), we obtain
\begin{eqnarray}\label{posit-regular*}
(\mathcal{A^{*}} U^\sharp, U^\sharp) &=& \frac{1}{\lambda} \int_0^{L_1}[v^\sharp \psi^\sharp (y=0)-v^\sharp \psi^\sharp (y=L_2)] \ud x \\
&=& \frac{N^2}{4\lambda} \int_0^{L_1}[v^{\sharp2} (y=0)+v^{\sharp2} (y=L_2)] \ud x \ge 0.\nonumber
\end{eqnarray}
This proves the positivity of $\mathcal{A^{*}}$ and ends thus the proof of the lemma.
\end{proof}

We are now able to state our existence result of which the proof is already done thanks to the positivity results obtained in Lemma \ref{pos-A} and Lemma \ref{pos-A*}. Indeed with the closedness of $A$ and $A^*$ which is easy to prove (and $D(A), D(A^*)$ dense in $H$), these lemmas imply that $-A$ is the infinitesimal generator of a semigroup of contractions in $H=L^2(\Omega)^3$. More precisely, we have the following existence result.

\begin{thm}\label{th3.1}
For $F$ given in $\mathcal{C}^1(0,T;H)$ and $U_0$ given in $D(A)$, there exists a unique solution $U$ to the system (\ref{abstract-pb}) with
\begin{align}\label{exist-sol}
\left\{%
\begin{array}{c}
U \in \mathcal{C}^1([0,T];H) \cap \mathcal{C}^0([0,T];D(A)),\\
\displaystyle{\frac{\ud U}{\ud t}\in \mathcal{C}^0([0,T];H), \ \forall \ T>0}.
\end{array}%
\right.
\end{align}
\end{thm}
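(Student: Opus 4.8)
The plan is to deduce Theorem~\ref{th3.1} directly from the Hille--Phillips--Yosida theorem applied to the operator $A = \mathcal{A} + \mathcal{B}$ on $H = \mathbf{L}^2(\mathcal{M'})$, equipped with the weighted inner product \eqref{inner_all}. The strategy has four ingredients that need to be assembled: (i) $D(A)$ is dense in $H$; (ii) $A$ is closed; (iii) $A$ and its adjoint $A^*$ are (maximal) monotone, i.e.\ $(AU,U)\ge 0$ for $U\in D(A)$ and $(A^*U^\sharp,U^\sharp)\ge 0$ for $U^\sharp\in D(A^*)$; and (iv) the range condition, $R(I+\lambda A) = H$ for some (hence all) $\lambda>0$. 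Ingredient (iii) is exactly Lemma~\ref{pos-A} and Lemma~\ref{pos-A*}, so it is already in hand. The bounded perturbation $\mathcal{B}$ is skew-symmetric on $H$ (indeed $(\mathcal{B}U,U) = \int_{\mathcal{M'}}(-fvu + fuv)\,d\mathcal{M'} = 0$), so it does not affect positivity, and being continuous on $H$ it does not affect the domain, closedness, or density; it is therefore harmless throughout.

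First I would record the density of $D(A)$ in $H$: the regularization procedure from the proof of Lemma~\ref{pos-A} (partial convolution in $x$) already shows that smooth-in-$x$ functions satisfying the boundary conditions \eqref{bc}$_{1,2}$ are dense, and one checks that $\mathcal{C}^\infty$ functions which are $L_1$-periodic in $x$ and satisfy the two scalar boundary relations \eqref{zeta}--\eqref{xhi} at $y=0,L_2$ are dense in $H$; the analogous statement holds for $D(A^*)$ with \eqref{bc-a*}. Next I would verify that $A$ is closed: if $U_k\to U$ and $AU_k\to G$ in $H$, then $\mathcal{A}U_k\to G-\mathcal{B}U$ in the sense of distributions, so $\mathcal{A}U = G-\mathcal{B}U\in\mathbf{L}^2(\mathcal{M'})$, whence $U\in\mathcal{X}(\mathcal{M'})$; by the trace results of Lemma~\ref{lem-sapce xi} the boundary values $(v+N^{-1}\psi)|_{y=L_2}$, $(v-N^{-1}\psi)|_{y=0}$ pass to the limit in the appropriate negative-order space, so $U\in D(A)$ and $AU=G$. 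The same argument gives closedness of $A^*$.

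Then I would establish the range condition. Given $F\in H$ and $\lambda>0$, I want $U\in D(A)$ with $U+\lambda AU=F$. Since $A^*$ is monotone and closed with dense domain, $I+\lambda A^*$ is injective with closed range, and $R(I+\lambda A)^\perp = N(I+\lambda A^*) = \{0\}$, so $R(I+\lambda A)$ is dense in $H$; combined with monotonicity of $A$ (which gives $|U+\lambda AU|\ge|U|$, hence $I+\lambda A$ has closed range) this yields $R(I+\lambda A)=H$. Therefore $-A$ is $m$-dissipative, and by the Lumer--Phillips form of Hille--Phillips--Yosida it generates a $C_0$-semigroup of contractions $\{e^{-tA}\}_{t\ge 0}$ on $H$. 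The conclusion \eqref{exist-sol} is then the standard regularity theorem for the inhomogeneous Cauchy problem $\frac{\ud U}{\ud t}+AU=F$, $U(0)=\widetilde U$: for $F\in\mathcal{C}^1([0,T];H)$ and $\widetilde U=U_0\in D(A)$ the mild solution $U(t)=e^{-tA}U_0+\int_0^t e^{-(t-s)A}F(s)\,ds$ is a classical solution with $U\in\mathcal{C}^1([0,T];H)\cap\mathcal{C}^0([0,T];D(A))$ and $\frac{\ud U}{\ud t}\in\mathcal{C}^0([0,T];H)$, and uniqueness follows from the energy estimate (equivalently, from $-A$ being dissipative).

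The main obstacle is the range condition, i.e.\ solvability of the resolvent equation $U+\lambda AU=F$ with the nonlocal boundary conditions \eqref{bc}$_{1,2}$; everything else (density, closedness, the harmless skew perturbation, the abstract semigroup regularity) is routine. The cleanest route, as sketched above, is to avoid constructing the resolvent by hand and instead get surjectivity of $I+\lambda A$ for free from the monotonicity of the \emph{adjoint} $A^*$ (Lemma~\ref{pos-A*}) together with the a priori bound from monotonicity of $A$ (Lemma~\ref{pos-A}), which is precisely why both positivity lemmas were needed. One delicate point to be careful about in writing the closedness argument is that the traces in Lemma~\ref{lem-sapce xi} live only in negative-order Sobolev spaces in the tangential variable, so the passage to the limit in \eqref{bc}$_{1,2}$ must be carried out in the topology of $\mathcal{C}_y([0,L_2];H^{-1}_x)$ rather than in $L^2$ of the boundary; this is enough to keep $D(A)$ closed.
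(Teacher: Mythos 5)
Your proposal is correct and follows essentially the same route as the paper: the paper likewise deduces Theorem~\ref{th3.1} from the positivity of $A$ and $A^*$ (Lemmas~\ref{pos-A} and~\ref{pos-A*}), together with closedness of $A$, $A^*$ and density of $D(A)$, $D(A^*)$ in $H$, concluding that $-A$ generates a contraction semigroup and then invoking the standard theory for the inhomogeneous Cauchy problem with $F\in\mathcal{C}^1$ and $\widetilde U\in D(A)$. You merely spell out the details (range condition via the adjoint, closedness via the trace lemma) that the paper leaves as ``easy to prove.''
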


\begin{rem}\label{remark4.1}
Remember that Theorem \ref{th3.1} relates to the modes $n, n\ge 1$. Then, as in \cite{rtt08}, we have the additional task of combining all the modes $n\ge 1$ together with the mode $n=0$. This will be done in Section \ref{sec6}. Furthermore, concerning the mode $n$, it is sufficient to observe that by differentiating (\ref{abstract-pb})$_1$ (with the dependence in $n$ reintroduced here) once in time, multiplying the obtained equation by $U_{n}'$ and using the Gronwall inequality, Theorem \ref{th3.1} gives in fact that
\begin{equation}
|U_{n}'(t)|^2_{H} \le \kappa |U_{n}'(0)|^2_{H} + \int_0^t |F_{n}'(s)|^2_{H} \ud s,\footnote{Here the space $H$ which is given by (\ref{spaceH}); in fact $H=H_n$.}
\end{equation}
where $\kappa$ is a constant which may change of value from one place to another while remaining independent of $n$. Using $U_{n}'(0)=F_{n}(0)-A_n U_n(0)=F_{n}(0)-A_n \widetilde{U}_n,$ (\ref{operator-A}) and  the fact that $\lambda_n \to \infty$ as $n \to \infty$, we infer that
\begin{equation}\label{bound1}
|U_{n}'(t)|^2_{H} \le \kappa \left(|F_{n}(0)|^2_{H}+|\widetilde{U}_n|^2_{H^1(\mathcal{M'})^3}\right) + \int_0^t |F_{n}'(s)|^2_{H} \ud s.
\end{equation}
Moreover, by direct energy estimates of (\ref{abstract-pb}) we easily obtain the following norm continuity estimate, with $\kappa$ again independent of $n$:
\begin{equation}\label{bound2}
|U_{n}(t)|_{H} \le \kappa |\widetilde{U}_n|_{H}, \quad \forall \ t \in (0,T).
\end{equation}
When we deal with the existence of the concatenated solution in Section \ref{sec4} below, the estimates (\ref{bound1})-(\ref{bound2}) ensure the boundedness properties of the   concatenated solution as it is needed for the Hille-Phillips-Yosida Theorem.
\end{rem}

\section{Existence and regularity results for the zeroth mode}\label{sec-mode0}
We deal in this section with the existence of solution for the zeroth mode solution of (\ref{mode_zero}) together with the boundary and initial conditions (\ref{mode_zero_limit_bdry})--(\ref{mode_zero_limit_bdry1}) and (\ref{ini_cond_mode}) (for $n=0$), respectively. For that purpose, we will follow the same steps as developed in \cite{CST09}. However, we have to adapt here the computations which are somehow simpler thanks to the periodicity in $x$. Indeed, the first space derivative term of the velocity in (\ref{mode_zero})$_{1,2}$ is taken with respect to the periodic variable $x$. This is a simplification in studying the existence for the zero mode. Hence, the proof follows in part that of \cite{CST09} with the changes introduced above. See below for the details.\\

In considering the zeroth mode, we temporarily drop the Coriolis force which corresponds to a linear bounded operator. Now, let us introduce the function spaces $H^0, \mathcal{X}^0(\mathcal{M'})$ and $D(A^0)$ corresponding to the stationary operator associated with the zero mode system (\ref{mode_zero})$_{1,2}$, namely
\begin{equation}\label{space-glob-2}
H^0 =\{{\bf u}=(u,v) \in L^2(\mathcal{M'})^2 \ \text{s.t.} \ u_x+v_y=0, \ \text{and} \ {\bf u}
\ \text{satisfies} \ (\ref{mode_zero_limit_bdry}) \ \text{and} \ (\ref{mode_zero_limit_bdry1}) \},
\end{equation}
and
\begin{equation}\label{space-glob-3}
\mathcal{X}^0(\mathcal{M'}) = \left\{ {\bf u} \in H^0, \exists \ \varphi \in \mathcal{D}'(\mathcal{M'}) \ \text{s.t.} \ \mathcal{A}^0{\bf u} \in H^0 \right\},
\end{equation}
with
\begin{align}\label{operator-A0}
\mathcal{A}^0{\bf u} =\left\{%
\begin{array}{l}
-\bar{U}_0 u_{x} + \varphi_{x},\\
-\bar{U}_0 v_{x} + \varphi_{y}.
\end{array}%
\right.
\end{align}
%Note that that the Coriolis terms, appearing in (\ref{mode_zero})$_{1,2}$, form a continuous linear operator of which the treatment is easy. Hence, we will omit these terms in the study of the existence of the zero mode solution. It is also worth noting that the pressure $\varphi$ is unique up to an additive constant.
Note that the pressure $\varphi$ is unique up to an additive constant.

Now, we can define the traces of the zero mode solution as it is stated in the following lemma.
\begin{lem}
For ${\bf u}=(u,v) \in \mathcal{X}^0(\mathcal{M'})$, the traces of $v$ and $\varphi$ are well-defined on all sides of $\partial\mathcal{M'}$ and belong to $H_x^{-1}(0,L_1), H_y^{-1}(0,L_2)$. Moreover, the traces of $u$ at $x=0, L_1$ are defined  and belong to $H_y^{-1}(0,L_2)$.
\end{lem}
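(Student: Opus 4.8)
The plan is to mimic the argument of Lemma~\ref{lem-sapce xi} for the nonzero modes, now applied to the two-component field $\mathbf{u}=(u,v)$ and the scalar potential $\varphi$, using the two equations defining $\mathcal{A}^0\mathbf{u}$ in \eqref{operator-A0} together with the incompressibility relation $u_x+v_y=0$ built into $H^0$. Since $\mathbf{u}\in\mathcal{X}^0(\mathcal{M'})$ gives $\mathcal{A}^0\mathbf{u}\in H^0\subset L^2(\mathcal{M'})^2$, the quantities $-\bar U_0 u_x+\varphi_x$ and $-\bar U_0 v_x+\varphi_y$ both lie in $L^2(\mathcal{M'})$.

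First I would record the elementary regularity of $\mathbf{u}$ in each variable separately: as a function in $L^2_x(0,L_1;L^2_y(0,L_2))$ (and symmetrically with $x,y$ exchanged), $\mathbf{u}$ has weak derivatives so that $u_x,v_x\in L^2_x(0,L_1;H^{-1}_y(0,L_2))$ and $u_y,v_y\in L^2_y(0,L_2;H^{-1}_x(0,L_1))$. Then I would extract the statements on $v$: from $(\mathcal{A}^0\mathbf{u})_1\in L^2$ I get $\varphi_x\in L^2_x(0,L_1;H^{-1}_y(0,L_2))$, hence $\varphi\in\mathcal{C}_x([0,L_1];H^{-1}_y(0,L_2))$, so the traces of $\varphi$ at $x=0,L_1$ make sense in $H^{-1}_y(0,L_2)$; from $(\mathcal{A}^0\mathbf{u})_2\in L^2$ combined with $v_x\in L^2_x(0,L_1;H^{-1}_y(0,L_2))$ I get $\varphi_y\in L^2_x(0,L_1;H^{-1}_y(0,L_2))$; but I also need $\varphi_y\in L^2_y(0,L_2;H^{-1}_x(0,L_1))$ to produce the trace at $y=0,L_2$, and this follows from $(\mathcal{A}^0\mathbf{u})_2$ once I observe $v_x$ can equally be estimated in $L^2_y(0,L_2;H^{-1}_x(0,L_1))$. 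This yields $\varphi\in\mathcal{C}_y([0,L_2];H^{-1}_x(0,L_1))$ and thus the traces of $\varphi$ on the horizontal sides. The trace of $v$ at $y=0,L_2$ is the subtle point: I would use the divergence-free condition $v_y=-u_x$, which together with $u_x\in L^2_x(0,L_1;H^{-1}_y(0,L_2))$ — reinterpreted, again, in the $L^2_y(0,L_2;H^{-1}_x(0,L_1))$ scale — gives $v\in\mathcal{C}_y([0,L_2];H^{-1}_x(0,L_1))$; hence the trace of $v$ at $y=0,L_2$ is well-defined, which in particular makes the boundary condition \eqref{mode_zero_limit_bdry1} meaningful. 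For the trace of $v$ on the vertical sides $x=0,L_1$, I would subtract $\varphi_x$ (now known in the right space) from $(\mathcal{A}^0\mathbf{u})_1$… no: rather, from $(\mathcal{A}^0\mathbf{u})_2=-\bar U_0 v_x+\varphi_y$ with $\varphi_y\in L^2_x(0,L_1;H^{-1}_y(0,L_2))$ I obtain $v_x\in L^2_x(0,L_1;H^{-1}_y(0,L_2))$, hence $v\in\mathcal{C}_x([0,L_1];H^{-1}_y(0,L_2))$ and its traces at $x=0,L_1$ are defined; the same computation applied to $u$ via $(\mathcal{A}^0\mathbf{u})_1=-\bar U_0 u_x+\varphi_x$ with $\varphi_x\in L^2$ gives $u_x\in L^2_x(0,L_1;H^{-1}_y(0,L_2))$, so $u\in\mathcal{C}_x([0,L_1];H^{-1}_y(0,L_2))$ and its traces at $x=0,L_1$ lie in $H^{-1}_y(0,L_2)$, which is exactly what legitimizes the periodicity conditions \eqref{mode_zero_limit_bdry}--\eqref{mode_zero_limit_bdry2}.

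The main obstacle I expect is the bookkeeping: unlike the nonzero-mode case, where $\psi_x$ and $u_x$ appear in the \emph{same} component of $\mathcal{A}$, here the coupling between $\mathbf{u}$ and the auxiliary $\varphi$ is looser, so I must be careful to first upgrade the regularity of $\varphi$ (which has no a priori $L^2$ bound, only $\varphi\in\mathcal{D}'$) before I can use it to get information on $v$ and $u$; the order of the steps — $\varphi_x$ first, then $\varphi_y$, then $v_y$ via incompressibility, then $v_x$, then $u_x$ — is what makes the argument close. One should also note that $\varphi$ being determined only up to an additive constant is harmless for the trace statement, since a constant has well-defined traces. No step requires more than the standard fact that a function whose $L^2$-valued weak derivative in one variable lies in a negative Sobolev space in the other variables belongs to the corresponding space of continuous vector-valued functions; I would invoke this exactly as in the proof of Lemma~\ref{lem-sapce xi}.
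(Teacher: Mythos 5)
There is a genuine gap, and it sits exactly at the point you flag as ``the main obstacle'': your chain never actually controls $\varphi$ itself, and the order of steps you propose does not close the circle. Two concrete problems. First, the elementary mixed-norm facts are stated backwards: for $u\in L^2(\mathcal{M'})$ one has $u_x\in L^2_y(0,L_2;H^{-1}_x(0,L_1))$ and $u_y\in L^2_x(0,L_1;H^{-1}_y(0,L_2))$ (differentiation in $x$ costs regularity in $x$, not in $y$), whereas you assert $u_x,v_x\in L^2_x(0,L_1;H^{-1}_y(0,L_2))$; your deduction of $\varphi_x$ and $\varphi_y$ in $L^2_x(H^{-1}_y)$ rests on these swapped claims (the correct route to $\varphi_x\in L^2_x(H^{-1}_y)$ is through incompressibility, $u_x=-v_y$, and the route to $\varphi_y$ there requires $v_x\in L^2_x(H^{-1}_y)$, which in turn requires $\varphi_y$ there --- a circularity). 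Second, and more seriously, even granting $\varphi_x\in L^2_x(0,L_1;H^{-1}_y(0,L_2))$ you cannot conclude $\varphi\in\mathcal{C}_x([0,L_1];H^{-1}_y(0,L_2))$: since $\varphi$ is a priori only in $\mathcal{D}'(\mathcal{M'})$ (see \eqref{space-glob-3}), integrating $\varphi_x$ in $x$ determines $\varphi$ only up to an arbitrary distribution of $y$ alone, which need not lie in $H^{-1}_y(0,L_2)$; the same objection applies to recovering $\varphi$ from $\varphi_y$ for the traces at $y=0,L_2$. In the nonzero-mode Lemma \ref{lem-sapce xi} there is no such unknown potential, which is precisely why that argument does not transplant verbatim.

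The missing ingredient is a preliminary global bound on the pressure, and this is what the paper's actual proof (the reference to Theorem 2.1 of \cite{CST09}) supplies. From \eqref{operator-A0} and ${\bf u}\in L^2(\mathcal{M'})^2$ one gets $\nabla\varphi=\mathcal{A}^0{\bf u}+\bar{U}_0{\bf u}_x\in H^{-1}(\mathcal{M'})^2$, so by the de Rham--Ne\v{c}as lemma (or, equivalently, by taking the divergence, using $u_x+v_y=0$ and $\operatorname{div}\mathcal{A}^0{\bf u}=0$ from \eqref{space-glob-2} to get $\Delta\varphi=0$ and then elliptic theory) one obtains $\varphi\in L^2(\mathcal{M'})$ up to the additive constant. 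Once $\varphi\in L^2(\mathcal{M'})$ is known, your bookkeeping does close, in the corrected norms: $v_y=-u_x$ and $\varphi_y=(\mathcal{A}^0{\bf u})_2+\bar{U}_0v_x$ lie in $L^2_y(0,L_2;H^{-1}_x(0,L_1))$, giving the traces of $v$ and $\varphi$ at $y=0,L_2$ (and the meaning of \eqref{mode_zero_limit_bdry1}); and $u_x=-v_y$, hence $\varphi_x$, hence $v_x=\bar{U}_0^{-1}(\varphi_y-(\mathcal{A}^0{\bf u})_2)$, all lie in $L^2_x(0,L_1;H^{-1}_y(0,L_2))$, giving the traces of $u,v,\varphi$ at $x=0,L_1$. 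So the statement is reachable along your lines, but only after inserting the $\varphi\in L^2$ step that your proposal omits.
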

\begin{proof}
The proof is similar to that of Theorem 2.1 in \cite{CST09}.
%Let ${\bf u}=(u,v) \in \mathcal{X}^0(\mathcal{M'}) \subset L^2_x(0,L_1; (L^2_y(0,L_2))^2)$. We have $\mathcal{A}_0{\bf u} \in
%L^2(\mathcal{M'})^2$ and we set $\mathcal{A}_0{\bf u}={\bf F}=(F_1, F_2)$. Hence, we obtain the following system
%\begin{align}\label{Au=F}
%\left\{
%\begin{array}{l}
%-\bar{U}_0 u_{x} -fv+ \varphi_{x}=F_1,\\
%-\bar{U}_0 v_{x} +fu+ \varphi_{y}=F_2,\\
%u_x+v_y=0.
%\end{array}%
%\right.
%\end{align}
\end{proof}

Finally, we can introduce the space $D(A^0)$ which is defined as follows:
\begin{equation}\label{domain-A0}
D(A^0) = \left\{
{\bf u}=(u,v) \in \mathcal{X}^0(\mathcal{M'}), {\bf u}\ \text{satisfies} \ (\ref{mode_zero_limit_bdry2})\right\}.
\end{equation}
Note that $A^0$ is closed (straightforward with the use of sequences), and the space $D(A^0)$ is dense in $H^0$ since $D(A^0) \supset \mathcal{V}$ where
\begin{displaymath}
\begin{array}{c}
\mathcal{V}=\{{\bf u}=(u,v) \in \mathcal{C}^{\infty}(\mathcal{M'}), \ \text{s.t.} \ {\bf u} \ \text{has compact support in $(0,L_2)$}\\ \text{and is periodic in} \ x\ \ \text{and} \ u_x+v_y=0\};
\end{array}
\end{displaymath}
we set $A^0 {\bf u}=\mathcal{A}^0{\bf u},$ for ${\bf u} \in D(A^0)$.

In the following, we will define the adjoint operator $A^{0*}$ of $A^0$. Similarly as for $A^0$, we introduce the function spaces related to the adjoint operator $A^{0*}$:
\begin{equation}\label{space-glob-3*}
\mathcal{X}^{0*}(\mathcal{M'}) = \left\{ {\bf u} \in H^0, \exists \ \varphi \in \mathcal{D}'(\mathcal{M'}) \ \text{s.t.} \ \mathcal{A}^*_0{\bf u} \in H^0 \right\},
\end{equation}
with
\begin{align}\label{operator-A0*}
\mathcal{A}^*_0{\bf u} =\left\{
\begin{array}{l}
-\bar{U}_0 u_{x} +fv+ \varphi_{x},\\
-\bar{U}_0 v_{x} -fu+ \varphi_{y}.
\end{array}%
\right.
\end{align}
Then, the domain of $A^{0*}$, \cite{rudin}, is simply given by
\begin{equation}\label{domain-A0*}
D(A^{0*}) = \left\{
{\bf u}=(u,v) \in \mathcal{X}^{0*}(\mathcal{M'}), {\bf u}\ \text{satisfies} \ (\ref{mode_zero_limit_bdry2})\right\},
\end{equation}
and we set $A^{0*} {\bf u}=\mathcal{A}_0^*{\bf u}, \forall {\bf u} \in D(A^{0*})$.\\
It is also worth noting that the space $D(A^{0*})$ which contains $\mathcal{V}$ is dense in $H^0$ and the operator $A^{0*}$ is closed.
\begin{rem}\label{rem-reg-mode0}
We note that for ${\bf u} \in D(A^{0})$ (the same assertion holds for $D(A^{0*})$), we have ${\bf u}_x \in L^2(\mathcal{M'})^2$ and the corresponding $\varphi$ vanishes. Indeed, using the definition of $D(A^{0})$ as in (\ref{domain-A0}) and more precisely the definition of the auxiliary space $\mathcal{X}^0(\mathcal{M'})$ as in (\ref{space-glob-3}), we have the existence of $\varphi \in \mathcal{D}'(\mathcal{M'})$ such that $\mathcal{A}^{0} {\bf u}={\bf f}=(f_1, f_2) \in H^0$. Hence, writing that $\textnormal{div}\, \mathcal{A}^{0} {\bf u}=0$, we find that $\varphi$ is a harmonic function. Writing in addition the boundary conditions from (\ref{space-glob-2}) we find that
\begin{displaymath}
(-\bar{U}_0 u_{x} + \varphi_{x})|_{x=0}=(-\bar{U}_0 u_{x} + \varphi_{x})|_{x=L_1},
\end{displaymath}
and since $u|_{x=0}=u|_{x=L_1}$ this implies that $\varphi|_{x=0}=\varphi|_{x=L_1}$. Similarly,
\begin{displaymath}
(-\bar{U}_0 v_{x} + \varphi_{y})|_{y=0}=(-\bar{U}_0 u_{x} + \varphi_{x})|_{y=L_2},
\end{displaymath}
and since $v|_{y=0}=v|_{y=L_2}=0$, we deduce that $$\varphi|_{y=0}=\varphi|_{y=L_2}=0.$$
All these boundary conditions imply that $\varphi=0$ ($=$ constant). Finally, if ${\bf u} \in D(A^{0})$ then
\begin{displaymath}
\mathcal{A}^0{\bf u} =-\bar{U}_0( u_{x}, v_{x})\in H^0,
\end{displaymath}
and we conclude that ${\bf u}_x \in L^2(\mathcal{M'})^2$ and
\begin{equation}
\|{\bf u}_x\|_{L^2(\mathcal{M'})^2} = \|\mathcal{A}^0{\bf u}\|_{H^0}=\|{\bf u}\|_{D(A^{0})}.
\end{equation}
\end{rem}

\subsection{Some properties related to the operator $A^0$}
We show in this paragraph some properties related to the system $A^0{\bf u}={\bf F}=(F_1, F_2)$ that we will use in the proof of the positivity of the operators $A^0$ and $A^{0*}$. To do that, we aim to prove some lemmas analogous to Lemmas 2.1 and 2.2 in \cite{CST09}.
\begin{lem}\label{lem-maps}
The operator $A^0$ maps $D(A^0)$ onto $H^0$.
\end{lem}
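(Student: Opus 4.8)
The approach is the one used for Lemma 2.1 in \cite{CST09}, simplified by the periodicity in $x$: given $\mathbf{F}=(F_1,F_2)\in H^0$ we must exhibit $\mathbf{u}=(u,v)\in D(A^0)$ with $A^0\mathbf{u}=\mathbf{F}$. First I would use Remark \ref{rem-reg-mode0}: the pressure attached to any element of $D(A^0)$ vanishes, so $A^0\mathbf{u}=-\bar U_0\mathbf{u}_x$ and the problem reduces to finding $\mathbf{u}\in D(A^0)$ solving the transport system $\mathbf{u}_x=-\bar U_0^{-1}\mathbf{F}$ in $\mathcal{M}'$. Next I would record the only information available on $\mathbf{F}$, namely the defining conditions of $H^0$: $\mathbf{F}\in L^2(\mathcal{M}')^2$, $\operatorname{div}\mathbf{F}=(F_1)_x+(F_2)_y=0$, $F_1$ is $L_1$-periodic in $x$, and $F_2=0$ at $y=0,L_2$; integrating $\operatorname{div}\mathbf{F}=0$ in $x$ over $(0,L_1)$ and using the periodicity of $F_1$ shows in addition that $y\mapsto\int_0^{L_1}F_2(x,y)\,dx$ is independent of $y$, hence $\equiv 0$.

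I would then build a candidate by integrating along the transport direction $x$,
\begin{align*}
u(x,y)=-\bar U_0^{-1}\!\int_0^x F_1(x',y)\,dx'+a(y),\qquad v(x,y)=-\bar U_0^{-1}\!\int_0^x F_2(x',y)\,dx'+b(y),
\end{align*}
with profiles $a\in L^2(0,L_2)$, $b\in H^1(0,L_2)$ still free; then automatically $\mathbf{u}\in L^2(\mathcal{M}')^2$ and $-\bar U_0\mathbf{u}_x=\mathbf{F}$. The remaining task is to choose $a,b$ so that $\mathbf{u}\in D(A^0)$, i.e. so that $u_x+v_y=0$, $v=0$ at $y=0,L_2$, and $u,v$ (whence $\mathbf{u}_x$) are $L_1$-periodic in $x$. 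Here the structural identities above are used: $(F_2)_y=-(F_1)_x$ lets one compute $v_y$ and fix $b'$ so that $\operatorname{div}\mathbf{u}=0$; the vanishing of $\int_0^{L_1}F_2\,dx$ gives the $x$-periodicity of $v$ regardless of $b$; and the boundary condition on $F_2$ together with the remaining compatibility relations pin down $a$ and $b$ so that the conditions on $v$ at $y=0,L_2$ and the $x$-periodicity of $u$ hold. Once $\mathbf{u}$ is so fixed, $-\bar U_0\mathbf{u}_x=\mathbf{F}\in H^0$, so $\mathcal{A}^0\mathbf{u}\in H^0$ with vanishing pressure and $\mathbf{u}\in\mathcal{X}^0(\mathcal{M}')$; since moreover $v$ is periodic in $x$, condition (\ref{mode_zero_limit_bdry2}) holds, hence $\mathbf{u}\in D(A^0)$ and $A^0\mathbf{u}=\mathbf{F}$.

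The main obstacle is precisely this last bookkeeping: arranging a single pair of integration profiles $a(y),b(y)$ that yields simultaneously the divergence-free constraint, the homogeneous conditions on $v$ at $y=0,L_2$, and the $x$-periodicity of both components. This is where all the compatibility built into membership in $H^0$ must be spent, and it is the periodic analogue of the inflow/outflow argument of \cite{CST09}. To keep the pointwise manipulations legitimate, I would first run the construction for $\mathbf{F}$ in a dense subspace of $H^0$ adapted to the equation (for instance the smooth divergence-free periodic fields of $\mathcal{V}$), obtain the corresponding $\mathbf{u}\in D(A^0)$ there, and then pass to the limit, invoking that $A^0$ is closed --- a fact already noted above.
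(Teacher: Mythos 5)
Your reduction of the problem to the pure transport system $\mathbf{u}_x=-\bar U_0^{-1}\mathbf{F}$ is where the argument breaks down. Membership of $\mathbf{F}=(F_1,F_2)$ in $H^0$ forces, as you correctly note, $\int_0^{L_1}F_2(x,y)\,dx\equiv 0$, but it imposes no constraint at all on $\int_0^{L_1}F_1(x,y)\,dx$: for instance $\mathbf{F}=(g(y),0)$ with any nonconstant $g\in L^2(0,L_2)$ is divergence free, has $F_1$ trivially $L_1$-periodic and $F_2=0$ at $y=0,L_2$, hence lies in $H^0$. For such data the equation $u_x=-\bar U_0^{-1}F_1$ admits no solution $u$ that is $L_1$-periodic in $x$, since $u(L_1,y)-u(0,y)=-\bar U_0^{-1}\int_0^{L_1}F_1(x,y)\,dx\neq 0$ and your free profile $a(y)$ cancels from this difference; no bookkeeping with $a$ and $b$ can repair this (a similar compatibility problem appears when, after fixing $b'$ from the divergence constraint, you try to impose $v=0$ at both $y=0$ and $y=L_2$). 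The root of the difficulty is discarding the pressure via Remark \ref{rem-reg-mode0}: if $A^0\mathbf{u}$ were always equal to $-\bar U_0\mathbf{u}_x$ on $D(A^0)$, then the first component of anything in the range would have zero $x$-mean, and the surjectivity onto all of $H^0$ that you are trying to prove could not hold, so the reduction undermines the very statement. (Note that the remark's conclusion $\varphi\equiv\mathrm{const}$ tacitly uses periodicity of $\varphi$ itself, whereas the definition of $\mathcal{X}^0(\mathcal{M'})$ only requires $\varphi\in\mathcal{D}'(\mathcal{M'})$; a pressure with a part linear in $x$ is admissible, and its gradient is exactly what can absorb the nonzero $x$-mean of $F_1$.)

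The paper's proof keeps the pressure and never integrates the equations in $x$: for smooth $\mathbf{F}$ it takes the curl of the two momentum equations and uses incompressibility to obtain the elliptic problem $-\bar U_0\Delta v=F_{2x}-F_{1y}$ with $v=0$ at $y=0,L_2$ and periodicity in $x$, solves it for $v$, recovers $u$ from $u_x+v_y=0$ together with the boundary conditions, and only then reconstructs $\varphi$ from the equations; the general case follows by approximating $\mathbf{F}$ by elements of $\mathcal{V}$ and passing to the limit using the bounds supplied by the elliptic estimate (the $v_m$ bounded in $H^1(\mathcal{M'})$, hence the $u_m$ in $L^2(\mathcal{M'})$), in the spirit of Lemma 2.1 of \cite{CST09}. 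On this last point your proposal is also too thin: invoking only the closedness of $A^0$ does not allow a passage to the limit; closedness is of no use until you have uniform estimates on the approximate solutions $\mathbf{u}_m$ in terms of $\mathbf{F}_m$, and those estimates come precisely from the elliptic problem your construction bypasses.
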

\begin{proof}
Let ${\bf F}=(F_1, F_2) \in H^0$. Then, we look for  ${\bf u}=(u,v) \in D(A^0)$ such that $A^0{\bf u}={\bf F}$. Hence, we obtain the following system for ${\bf u}$:
\begin{align}\label{Au=F}
\left\{
\begin{array}{l}
-\bar{U}_0 u_{x} -fv+ \varphi_{x}=F_1,\\
-\bar{U}_0 v_{x} +fu+ \varphi_{y}=F_2,\\
u_x+v_y=0.
\end{array}%
\right.
\end{align}
First, we assume that ${\bf F}$ is smooth enough. We then take the curl of the first two equations of (\ref{Au=F}) and use (\ref{Au=F})$_3$; we obtain
\begin{equation}\label{elliptic}
\left \{
    \begin{array}{l}
    -\bar{U}_0 \Delta v=F_{2x} - F_{1y}, \ \text{in} \ (0,L_1)\times(0,L_2)\\
    v=0, \ \text{at} \ y=0,L_2,\\
    v|_{x=0}=v|_{x=L_1}.
    \end{array}
\right.
\end{equation}
Hence, the solution $v$ exists and is unique, and the solution $u$ is deduced using the incompressibility condition (\ref{Au=F})$_3$ and the boundary conditions (\ref{mode_zero_limit_bdry}), (\ref{mode_zero_limit_bdry2}). The pressure $\varphi$ can be also easily obtained from (\ref{Au=F}) and (\ref{elliptic}).\\

Now, for the general case ${\bf F} \in H^0$, consider $({\bf F}_m)_m \subset \mathcal{V}$ a sequence of functions such that ${\bf F}_m \to {\bf F}$ in $H^0$, as $m \to \infty$. For each ${\bf F}_m$, there exists ${\bf u}_m=(u_m,v_m) \in D(A^0)$ such that $A^0{\bf u}_m={\bf F}_m$. The process explained before for smooth functions, that is regularization and convolution (see e.g. the proof of Lemma \ref{pos-A}), gives the existence of $u_m, v_m$ and $\varphi_m$. We infer from (\ref{elliptic}) written for $v_m, {\bf F}_m$ that the $v_m$ are bounded in $H^1(\mathcal{M'})$, and thus the $u_m$ are bounded in $L^2(\mathcal{M'})$. It is then easy to see that $u_m, v_m, \varphi_m$ converge to $u, v, \varphi$, respectively, such that $A^0{\bf u}=A^0(u, v)={\bf F}$.
\end{proof}

\subsection{The positivity of $A^0$ and $A^{0*}$}
In this subsection, we aim to prove the positivity of the operators $A^0$ and $A^{0*}$. With these properties the Hille-Phillips-Yosida Theorem can be applied showing thus the existence of solution of the zeroth mode.
\begin{lem}\label{pos-A0}
The operators $A^0$ and $A^{0*}$ are positive.
\end{lem}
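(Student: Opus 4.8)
The plan is to establish positivity of $A^0$ and $A^{0*}$ by the same regularization scheme used in Lemma~\ref{pos-A}, reducing in each case to an integration by parts over $\mathcal{M'}$ in which the interior terms cancel and only boundary integrals remain, all of which vanish or have a sign by virtue of the boundary conditions. First I would recall, via Remark~\ref{rem-reg-mode0}, that for ${\bf u}\in D(A^0)$ the pressure $\varphi$ vanishes, so that $A^0{\bf u}=-\bar U_0(u_x,v_x)$; likewise for ${\bf u}\in D(A^{0*})$ one has (up to the bounded Coriolis part) $A^{0*}{\bf u}=-\bar U_0(u_x,v_x)$ modulo the same harmonic-pressure argument. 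Thus in both cases the operator is, up to the skew-symmetric Coriolis term which contributes nothing to the scalar product, the advection operator $-\bar U_0\partial_x$ acting componentwise.

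Next I would carry out the regularization: given ${\bf u}\in D(A^0)$, extend $u,v$ periodically in $x$ to $\er_x\times(0,L_2)$, convolve with $\rho_\ep$ in the $x$ variable, and restrict back to $\mathcal{M'}$, obtaining ${\bf u}_\ep=(u_\ep,v_\ep)$ that are $\mathcal{C}^\infty$ and $L_1$-periodic in $x$, divergence free, satisfying $v_\ep=0$ at $y=0,L_2$, with ${\bf u}_\ep\to{\bf u}$ and $A^0{\bf u}_\ep\to A^0{\bf u}$ in $H^0$ as $\ep\to0$. For the smooth approximants one computes
\begin{align*}
(A^0{\bf u}_\ep,{\bf u}_\ep)
&=-\bar U_0\int_{\mathcal{M'}}(u_\ep u_{\ep x}+v_\ep v_{\ep x})\,\ud\mathcal{M'}\\
&=-\frac{\bar U_0}{2}\int_0^{L_2}\big[u_\ep^2+v_\ep^2\big]_{x=0}^{x=L_1}\,\ud y=0,
\end{align*}
the last equality by $L_1$-periodicity in $x$; passing to the limit $\ep\to0$ gives $(A^0{\bf u},{\bf u})=0\ge0$. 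For $A^{0*}$ the same computation applies to the advection part, and the Coriolis contribution $\int_{\mathcal{M'}}(fv\,u-fu\,v)\,\ud\mathcal{M'}=0$ vanishes pointwise; hence $(A^{0*}{\bf u},{\bf u})=0\ge0$ for all ${\bf u}\in D(A^{0*})$.

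I expect the only genuine subtlety to be the justification that, after regularization, the pressure $\varphi_\ep$ associated with ${\bf u}_\ep$ still vanishes (so that one may legitimately replace $A^0{\bf u}_\ep$ by $-\bar U_0(u_{\ep x},v_{\ep x})$ inside the scalar product): this follows because convolution in $x$ commutes with the divergence and with the periodic and Dirichlet boundary conditions, so the harmonicity-plus-boundary argument of Remark~\ref{rem-reg-mode0} is preserved under regularization; alternatively one avoids the issue entirely by keeping $\varphi_\ep$ and noting $\int_{\mathcal{M'}}(\varphi_{\ep x}u_\ep+\varphi_{\ep y}v_\ep)\,\ud\mathcal{M'}=-\int_{\mathcal{M'}}\varphi_\ep(u_{\ep x}+v_{\ep y})\,\ud\mathcal{M'}+\text{(boundary terms)}$, where the divergence term is zero and the boundary terms vanish by periodicity in $x$ and by $v_\ep=0$ on $y=0,L_2$. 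Either way the interior coupling disappears and only the null advection boundary integral survives, which completes the proof.
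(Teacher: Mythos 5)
Your proof is correct, but it takes a genuinely different route from the paper's. The paper establishes positivity by first doing the smooth computation (\ref{posit-regular0}) keeping the pressure boundary terms, and then, for a general ${\bf u}\in D(A^0)$, it approximates the \emph{datum} ${\bf F}=A^0{\bf u}$ by ${\bf F}_m\in\mathcal{V}$ and invokes the surjectivity result of Lemma \ref{lem-maps} to produce smooth ${\bf u}_m\in D(A^0)$ with $A^0{\bf u}_m={\bf F}_m$ and ${\bf u}_m\to{\bf u}$ in the graph norm. You instead regularize the \emph{solution} ${\bf u}$ directly by partial convolution in $x$, exactly as in Lemma \ref{pos-A}, and you use Remark \ref{rem-reg-mode0} to identify $A^0$ (and $A^{0*}$ up to the pointwise skew Coriolis term) with the pure advection $-\bar U_0\partial_x$, so that the scalar product reduces to an $x$-boundary term that vanishes by periodicity. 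Each approach has its advantages: yours bypasses Lemma \ref{lem-maps} and the (only sketched) convergence ${\bf u}_m\to{\bf u}$ in $D(A^0)$, and in fact, once $\varphi=0$ is known, the mollification is nearly superfluous since ${\bf u}\in L^2_y(0,L_2;H^1_{x,\mathrm{per}})$ and the identity $\int_0^{L_1}u\,u_x\,\ud x=\tfrac12[u^2]_{x=0}^{x=L_1}=0$ holds directly; the paper's route does not lean on Remark \ref{rem-reg-mode0} inside this proof (although the cancellation of its $\varphi$-boundary terms in (\ref{posit-regular0}) implicitly needs the same pressure information you cite) and it recycles machinery already established for the zeroth mode. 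Note also that both arguments actually yield the sharper statement $(A^0{\bf u},{\bf u})=(A^{0*}{\bf u},{\bf u})=0$, not merely nonnegativity, and your fallback integration by parts with $\varphi_\ep$ retained reproduces the paper's own boundary-term computation, provided one records (as you do) that $\varphi_\ep|_{x=0}=\varphi_\ep|_{x=L_1}$ and the trace conditions survive the $x$-convolution.
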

\begin{proof}
First, we note that the proof of positivity for $A^{0*}$ is similar to the proof for $A^0$. Thus, we only prove this property for the operator $A^0$.
For that purpose, we first observe that for ${\bf u} \in D(A^{0})$ sufficiently smooth, we can easily see that
\begin{eqnarray}\label{posit-regular0}
\ \ (A^{0}{\bf u},{\bf u})\!\!&\!=\!&\! \frac{\bar{U}}{2} \int_0^{L_2}\![u^2+v^2]_{x=0}^{x=L_1} \ud y +\! \int_0^{L_2}\![\varphi u]_{x=0}^{x=L_1} \ud y+\! \int_0^{L_1}\![\varphi v]_{y=0}^{y=L_2} \ud x\\&\!=\!&(\text{Thanks to (\ref{mode_zero_limit_bdry}), (\ref{mode_zero_limit_bdry2}) and (\ref{mode_zero_limit_bdry1})})\nonumber\\
&\!=\!&  0.\nonumber
\end{eqnarray}
Now, for ${\bf u} \in D(A^{0})$, thanks to Lemma \ref{lem-maps}, we set ${\bf F} = A^0{\bf u} \in H^0$. Using the density of $\mathcal{V}$ in $H^0$, we can approximate ${\bf F}$ by a sequence of functions $({\bf F}_m)_m \subset \mathcal{V}$ such that ${\bf F}_m \to F$ in $H^0$, as $m \to \infty$. For each ${\bf F}_m$, using again Lemma \ref{lem-maps}, we have the existence of ${\bf u}_m=(u_m,v_m) \in D(A^0) \cap \mathcal{V}$ such that $A^0{\bf u}_m={\bf F}_m$. Thanks to (\ref{posit-regular0}), we deduce that $(A^0{\bf u}_m, {\bf u}_m) \ge 0$.\\
Finally, it is easy to see that ${\bf u}_m \to {\bf u}$ in $D(A^{0})$ and thus deduce the positivity of $A^{0}$.
\end{proof}

\subsection{Existence and regularity of solutions for the zeroth mode}

It is now straightforward to conclude the existence of the  solution of the zero mode equations by simply applying the Hille-Phillips-Yosida Theorem for which all the hypotheses are already verified. We remember here to introduce the Coriolis force which corresponds to a bounded linear operator in $H^0$.
\begin{prop}\label{exist-mode0}
For given ${\bf F}_{\bf u_0}=(F_{u_0}, F_{v_0})^T$ such that ${\bf F}_{\bf u_0} \in \mathcal{C}^1(0,T;{H^0})$ and ${\bf \tilde{u}_0}= (\tilde{u}_0, \tilde{v}_0) \in D(A^0)$, there exists a unique solution ${\bf u_0}=(u_0, v_0)$ to the system (\ref{mode_zero}) together with boundary and initial conditions (\ref{mode_zero_limit_bdry})-(\ref{mode_zero_limit_bdry1}) and (\ref{ini_cond_mode}) (for $n=0$) such that
\begin{align}\label{exist-sol-0}
\left\{%
\begin{array}{c}
{\bf u_0} \in \mathcal{C}^1([0,T];{H^0}) \cap \mathcal{C}^0([0,T];D(A^0)),\\
\displaystyle{\frac{\ud {\bf u_0}}{\ud t}\in \mathcal{C}^0([0,T];H^0), \ \forall \ T>0}.
\end{array}%
\right.
\end{align}

Furthermore, by Remark \ref{rem-reg-mode0}, ${\bf u_0}_x \in \mathcal{C}^0([0,T];{H^0})$ and the sum of the norms of ${\bf u_0}$ in all the spaces in (\ref{exist-sol-0}) is bounded, up to a multiplicative constant, by the sum of the norms of the data, ${\bf F}_{\bf u_0}, {\bf F}_{\bf u_0}^{'},{\bf \tilde{u}_0}$.
\end{prop}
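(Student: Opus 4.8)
\textbf{Proof strategy for Proposition \ref{exist-mode0}.}
The plan is to reduce the zeroth-mode problem \eqref{mode_zero}--\eqref{mode_zero_limit_bdry1} to the abstract Cauchy problem
\begin{align}\label{abstract-pb-0}
\left\{
\begin{array}{l}
\displaystyle{\frac{\ud {\bf u_0}}{\ud t}+A^0 {\bf u_0}+\mathcal{C}\,{\bf u_0}= {\bf F}_{\bf u_0},\quad \forall \,t>0,}\vspace{.2cm}\\
{\bf u_0}(0)={\bf \tilde u_0},
\end{array}
\right.
\end{align}
posed in the Hilbert space $H^0$, where $\mathcal{C}$ is the rotation (Coriolis) term ${\bf u}=(u,v)\mapsto(-fv,fu)$, and then to invoke the Hille--Phillips--Yosida theorem. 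First I would observe that $\mathcal{C}$ is skew-symmetric and bounded on $H^0$, so it suffices to establish the generation property for $-A^0$ alone; the Coriolis term is then reinstated by the standard bounded-perturbation theorem for semigroup generators. For $-A^0$ itself, the three hypotheses to be checked are already available from the material above: $D(A^0)$ is dense in $H^0$ (it contains $\mathcal{V}$), $A^0$ and $A^{0*}$ are closed (noted after \eqref{domain-A0} and \eqref{domain-A0*}), and $A^0,A^{0*}$ are positive (Lemma \ref{pos-A0}). Closedness plus density of the domains of a positive operator and its positive adjoint yield, by the Lumer--Phillips / Hille--Phillips--Yosida theorem, that $-A^0$ generates a contraction semigroup on $H^0$; equivalently $A^0+\sigma I$ is surjective for $\sigma>0$, which here also follows directly from Lemma \ref{lem-maps} combined with positivity.

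With the generation property in hand, I would apply the semigroup existence theorem to the data hypotheses ${\bf F}_{\bf u_0}\in\mathcal{C}^1(0,T;H^0)$ and ${\bf \tilde u_0}\in D(A^0)$: these are exactly the regularity assumptions guaranteeing a unique strong solution ${\bf u_0}\in\mathcal{C}^1([0,T];H^0)\cap\mathcal{C}^0([0,T];D(A^0))$ with $\ud{\bf u_0}/\ud t\in\mathcal{C}^0([0,T];H^0)$, which is precisely \eqref{exist-sol-0}. Uniqueness comes from positivity of $A^0+\mathcal{C}$ via the usual energy argument: for two solutions the difference ${\bf w}$ satisfies $\frac12\frac{\ud}{\ud t}|{\bf w}|_{H^0}^2 = -(A^0{\bf w},{\bf w})-(\mathcal{C}{\bf w},{\bf w})\le 0$ since $(\mathcal{C}{\bf w},{\bf w})=0$, whence ${\bf w}\equiv 0$. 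The additional regularity ${\bf u_0}_x\in\mathcal{C}^0([0,T];H^0)$ is then read off from Remark \ref{rem-reg-mode0}, which identifies $\|{\bf u}_x\|_{L^2(\mathcal{M'})^2}=\|A^0{\bf u}\|_{H^0}=\|{\bf u}\|_{D(A^0)}$ for ${\bf u}\in D(A^0)$; since ${\bf u_0}(t)\in D(A^0)$ continuously in $t$, so is ${\bf u_0}_x(t)$ in $H^0$. Finally the quantitative bound of the norms in \eqref{exist-sol-0} by the norms of ${\bf F}_{\bf u_0},{\bf F}_{\bf u_0}'$ and ${\bf \tilde u_0}$ is obtained by the two standard energy estimates: testing \eqref{abstract-pb-0} with ${\bf u_0}$ and using positivity gives $|{\bf u_0}(t)|_{H^0}\le|{\bf \tilde u_0}|_{H^0}+\int_0^t|{\bf F}_{\bf u_0}(s)|_{H^0}\ud s$, and differentiating \eqref{abstract-pb-0} in time, testing with $\ud{\bf u_0}/\ud t$, and using $\ud{\bf u_0}/\ud t(0)={\bf F}_{\bf u_0}(0)-(A^0+\mathcal{C}){\bf \tilde u_0}$ together with Remark \ref{rem-reg-mode0} controls $|\ud{\bf u_0}/\ud t(t)|_{H^0}$, hence (through the equation) $\|{\bf u_0}(t)\|_{D(A^0)}$, by the data norms.

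I expect the only genuinely delicate point to be the verification that $-A^0$ (rather than $-(A^0+\mathcal{C})$) is the one for which the semigroup hypotheses must be, and can be, checked — i.e. that the Coriolis term is legitimately a bounded perturbation on $H^0$ and does not interfere with the domain $D(A^0)$ or with the trace-based boundary conditions defining it. This is where the exposition parallels the $n\ge1$ case (the remark following \eqref{operator-A} that $\mathcal{B}$ is bounded on ${\bf L^2}$) and where one must be slightly careful, because here the incompressibility constraint $u_x+v_y=0$ is built into $H^0$; one checks that $\mathcal{C}$ preserves neither divergence-freeness as an issue — rather, $\mathcal{C}$ maps $H^0$ into $L^2(\mathcal{M'})^2$, and after applying the Leray-type projection implicit in the definition of $H^0$ one still has a bounded operator, so the perturbation argument goes through. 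All remaining steps are routine given Lemmas \ref{lem-maps} and \ref{pos-A0} and Remark \ref{rem-reg-mode0}.
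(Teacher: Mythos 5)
Your proposal is correct and follows essentially the same route as the paper: drop the Coriolis term (a bounded, skew-symmetric perturbation on $H^0$), use the already-established density, closedness and positivity of $A^0$ and $A^{0*}$ (together with the surjectivity of Lemma \ref{lem-maps}) to apply the Hille--Phillips--Yosida theorem, obtain the strong solution from ${\bf F}_{\bf u_0}\in\mathcal{C}^1(0,T;H^0)$, ${\bf \tilde u_0}\in D(A^0)$, and deduce ${\bf u_0}_x\in\mathcal{C}^0([0,T];H^0)$ and the data bounds from Remark \ref{rem-reg-mode0} plus standard energy estimates. This is exactly how the paper treats this step, which it regards as immediate once those ingredients are in place.
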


In what follows, we will derive some regularity results for the zero mode solution which are necessary to prove the regularity of the global solution. First, we rewrite here the system (\ref{mode_zero}) satisfied by the zero mode solution. For simplicity we omit in the following of this subsection the subscript $0$ assigned to the zero mode solution ${\bf u_0}=(u_0, v_0)$:
\begin{align}\label{mode_zero-reg}
\left\{%
\begin{array}{l}
u_{t} + \bar{U}_0u_{x} - f v + \phi_{x}  = F_{u},\\
v_{t} + \bar{U}_0 v_{x} + f u + \phi_{y}  =F_{v},\\
u_{x}+v_{y} = 0,
\end{array}%
\right.
\end{align}
and the boundary and initial conditions
\begin{align}\label{mode_zero-reg-bc}
\left\{%
\begin{array}{l}
u(0,y,t)=u(L_1,y,t), \ \forall \ y \in (0,L_2), \ \forall \ t \in (0,T),\\
v(0,y,t)=v(L_1,y,t),\ \forall \ y \in (0,L_2),  \ \forall \ t \in (0,T),\\
v = 0\ \text{at $y=0,L_2$,}\ \forall \ x \in (0,L_1),  \ \forall \ t \in (0,T),\\
(u,v) =
(\tilde{u},\tilde{v})(x,y)\ \text{at
$t=0$}.
\end{array}%
\right.
\end{align}
\begin{prop}\label{prop-reg-mode0}
We are given ${\bf F}_{\bf u}=(F_{u}, F_{v})^T$ and ${\bf u}$ such that
\begin{displaymath}
\left\{\begin{array}{l}
{\bf F}, {\bf F}_t, {\bf F}_{xx}, {\bf F}_{xxt} \in \mathcal{C}^0(0,T;{H^0}),\\
{\bf \tilde{u}} \in H^2(\mathcal{M'}) \cap D(A^0), \ {\bf \tilde{u}}_{x}, {\bf \tilde{u}}_{xx} \in D(A^0).
\end{array}
\right.
\end{displaymath}
Then the solution of (\ref{mode_zero-reg})-(\ref{mode_zero-reg-bc}) satisfies the following regularity properties
\begin{equation}\label{u0-reg}
{\bf u}_{xt} \in \mathcal{C}^0([0,T]; L^2(\mathcal{M'})^2), \ {\bf u} \in \mathcal{C}^0([0,T]; H^2(\mathcal{M'})^2) \ \text{and} \ \phi \in \mathcal{C}^0([0,T]; H^2(\mathcal{M'})).
\end{equation}
\end{prop}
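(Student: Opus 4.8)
The plan is to bootstrap regularity by differentiating the equations in $x$ and in $t$ and applying Proposition \ref{exist-mode0} (and Remark \ref{rem-reg-mode0}) to the differentiated systems, then recover the remaining spatial derivatives from the elliptic structure of the pressure and the divergence-free constraint. First I would observe that, since $x$ is the periodic direction, the operator $A^0$ commutes with $\partial_x$: if $(u,v)$ solves \eqref{mode_zero-reg}-\eqref{mode_zero-reg-bc} with data $({\bf F},{\bf\tilde u})$, then formally $(u_x,v_x)$ solves the same abstract problem \eqref{abstract-pb} (for the zero mode, including the bounded Coriolis term) with forcing ${\bf F}_x$ and initial data ${\bf\tilde u}_x$, and likewise $(u_{xx},v_{xx})$ with data $({\bf F}_{xx},{\bf\tilde u}_{xx})$. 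The hypotheses have been tailored precisely so that ${\bf F}_x,{\bf F}_{xx}\in\mathcal C^1(0,T;H^0)$ is not quite what is assumed (only ${\bf F},{\bf F}_t,{\bf F}_{xx},{\bf F}_{xxt}\in\mathcal C^0$), so I would instead run the $t$-differentiated estimate as in Remark \ref{remark4.1}: differentiate \eqref{abstract-pb}$_1$ in $t$, test with $U_t$, use positivity of $A^0$ and Gronwall to get ${\bf u}_t\in\mathcal C^0([0,T];H^0)$, and similarly for the systems satisfied by $(u_x,v_x)$ and $(u_{xx},v_{xx})$, obtaining ${\bf u}_{xt},{\bf u}_{xx}\in\mathcal C^0([0,T];H^0)$ and ${\bf u}_{xxt}\in\mathcal C^0([0,T];H^0)$. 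Throughout, the fact that for ${\bf u}\in D(A^0)$ the associated pressure vanishes and $\|{\bf u}_x\|_{L^2}=\|A^0{\bf u}\|_{H^0}$ (Remark \ref{rem-reg-mode0}) converts control of $A^0$-norms into control of $x$-derivatives.

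The next step is to upgrade from $x$-derivatives to full $H^2$ regularity. From $u_x+v_y=0$ we get $v_y=-u_x\in\mathcal C^0([0,T];L^2)$, and then $v_{xy}=-u_{xx}$, $v_{yy}=-(v_y)_y$; so I need one more derivative of $v$ in $y$. For that I return to the full (non-reduced) system \eqref{mode_zero-reg}: taking the curl of the first two equations and using $u_x+v_y=0$ yields, as in \eqref{elliptic}, an elliptic equation $-\bar U_0\Delta v = F_{2x}-F_{1y} - v_{xt} - f(\text{lower order})$ for $v$ at each fixed $t$, with $v=0$ at $y=0,L_2$ and $v$ $L_1$-periodic in $x$. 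The right-hand side lies in $\mathcal C^0([0,T];H^0)$ by the bounds just obtained together with the hypotheses on ${\bf F}$ (here I also need $F_{1y}\in L^2$, which follows from ${\bf F}\in H^0$ meaning ${\bf F}$ divergence-free — wait, that is not assumed; instead $F_{2x}-F_{1y}$ should be read as $\mathrm{curl}\,{\bf F}$ and I would keep it as a distribution and absorb it, or note that by the same regularization argument used for \eqref{elliptic} the curl of ${\bf F}\in\mathcal C^0(0,T;H^0)$ with ${\bf F}_{xx}\in\mathcal C^0$ gives enough). Elliptic regularity for the Laplacian with these mixed periodic/Dirichlet boundary conditions then gives $v\in\mathcal C^0([0,T];H^2(\mathcal M'))$; combined with $u_x=-v_y$, periodicity of $u$ in $x$, and the already-known $u_{xx}$, one obtains $u\in\mathcal C^0([0,T];H^2(\mathcal M'))$. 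Finally, $\phi$ (the zero-mode pressure of the \emph{evolution} problem, not the stationary one of Remark \ref{rem-reg-mode0}, which is why $\phi$ need not vanish) is recovered from \eqref{mode_zero-reg}$_1$-$_2$: $\phi_x = F_u - u_t - \bar U_0 u_x + fv$ and $\phi_y = F_v - v_t - \bar U_0 v_x - fu$; the right-hand sides lie in $\mathcal C^0([0,T];H^1(\mathcal M'))$ by the $H^2$ bounds on ${\bf u}$, the bound on ${\bf u}_t$ and ${\bf u}_{xt}$, and the hypotheses on ${\bf F}$, so $\nabla\phi\in\mathcal C^0([0,T];H^1)$, hence $\phi\in\mathcal C^0([0,T];H^2(\mathcal M'))$ up to the (harmless) additive constant.

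The main obstacle I anticipate is the bookkeeping at the level of the forcing terms: verifying that each differentiated or curled system has a right-hand side with exactly the regularity and continuity-in-time needed to invoke Proposition \ref{exist-mode0} / the Gronwall estimate of Remark \ref{remark4.1}, and in particular handling the term $v_{xt}$ that appears in the elliptic equation for $v$ (it is controlled only \emph{after} the $t$-differentiated estimate for $(u_x,v_x)$ is established, so the order of the steps matters). A secondary technical point is that all the formal differentiations must be justified by the same regularization-in-$x$ procedure as in the proof of Lemma \ref{pos-A} — extend periodically in $x$, convolve with $\rho_\ep$, differentiate, and pass to the limit — so that the computations take place among genuinely smooth functions; since $x$ is the periodic variable this introduces no boundary terms and the argument goes through verbatim. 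Once these points are settled, \eqref{u0-reg} follows.
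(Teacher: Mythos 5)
Your first step (the ``invariance'' of the zero-mode system under $\partial_x$ and $\partial_t$, applied through the semigroup existence result and the Gronwall-type estimate to get ${\bf u}_t,{\bf u}_{xt},{\bf u}_{xx},{\bf u}_{xxt}\in\mathcal C^0([0,T];L^2)$, and $v_y=-u_x$, $v_{yy}=-u_{xy}$ from incompressibility) is exactly the paper's route. The divergence occurs where you try to get the remaining $y$-regularity, and there your argument has a genuine gap: taking the curl of \eqref{mode_zero-reg}$_{1,2}$ and using $u_x+v_y=0$ (with $f$ constant, so the Coriolis contribution vanishes) gives
\begin{equation*}
\partial_t\left(v_x-u_y\right)+\bar U_0\,\Delta v = F_{vx}-F_{uy},
\end{equation*}
i.e.\ your ``elliptic equation for $v$ at fixed $t$'' has on its right-hand side not only $v_{xt}$ (which is indeed controlled by step 1) but also $u_{ty}$, which you dropped. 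That term cannot be treated as known data: $u_y$ is precisely the quantity you are trying to estimate, and its time derivative is even further out of reach at this stage, so the elliptic bootstrap for $v$ does not close. (Viewed correctly, the curl identity is a \emph{transport} equation for the vorticity $v_x-u_y$, not an elliptic problem.) A second, related defect is the forcing: $\mathrm{curl}\,{\bf F}=F_{vx}-F_{uy}$ involves a $y$-derivative of ${\bf F}$ that the hypotheses do not control in $L^2$, and your fallback of keeping it ``as a distribution'' caps elliptic regularity at $H^1$, short of the claimed $H^2$ for $v$.

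The paper avoids both problems by never forming an elliptic equation for the evolution problem. It obtains $u_y$ (and then $u_{xy}$, $u_{yy}$) by differentiating \eqref{mode_zero-reg}$_1$ once and twice in $y$ and reading the result as a transport equation $\theta_t+\bar U_0\theta_x=fv_y-\phi_{xy}+F_{uy}$ for $\theta=u_y$ (and its analogues), integrated along characteristics; the pressure terms $\phi_{xy}$, $\phi_{xyy}$, $\phi_{xx}$ are recovered from the momentum equations differentiated in $x$ only, so the needed hypotheses stay of the form $(\mathscr H_0)_x$, $(\mathscr H_0)_{xx}$ plus $L^2$ bounds on $\tilde{\bf u}_y,\tilde{\bf u}_{xy},\tilde{\bf u}_{yy}$ (available since $\tilde{\bf u}\in H^2$); finally $\phi\in\mathcal C^0([0,T];H^2)$ is read off from \eqref{mode_zero-reg}$_{1,2}$ much as in your last step. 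If you want to keep a vorticity-based argument, you would have to treat the curl equation as a transport equation for $v_x-u_y$ and then solve a div--curl system, but that requires $\mathrm{curl}\,{\bf F}$ (and, for $H^2$, its gradient) in $\mathcal C^0([0,T];L^2)$, i.e.\ hypotheses on $y$-derivatives of ${\bf F}$ beyond those stated.
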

\begin{proof}
We begin by observing that ${\bf u}_{x^k}, {\bf u}_{t^l}, {\bf u}_{x^kt^l}$ satisfy the same equations and boundary conditions (\ref{mode_zero-reg})-(\ref{mode_zero-reg-bc}) as ${\bf u}$ with ${\bf \tilde{u}}$ and ${\bf F}={\bf F}_{\bf u}$ replaced respectively by ${\bf \tilde{u}}_{x^k}, {\bf \tilde{u}}_{t^l}, {\bf \tilde{u}}_{x^kt^l}$ and ${\bf F}_{x^k}, {\bf F}_{t^l}, {\bf F}_{x^kt^l}$. Hence the conclusions of the existence result (\ref{exist-sol-0}) hold also for any derivative of ${\bf u}$ w.r.t $x$ or $t$ provided we make the suitable assumptions on the data. We will call this property {\it the invariance} property of our system (\ref{mode_zero-reg})-(\ref{mode_zero-reg-bc}) under differentiation w.r.t $x$ or $t$.\\

Now if we call $(\mathscr{H}_0)$ the set of hypotheses made in Proposition \ref{exist-mode0}, then we already have
\begin{equation}\label{4.14a}
{\bf u}, {\bf u}_{x}, {\bf u}_{t} \in \mathcal{C}^0([0,T]; L^2(\mathcal{M'})^2).
\end{equation}
With an obvious notation, the hypotheses $(\mathscr{H}_0)_x$ imply in addition that
\begin{equation}
{\bf u}_{xx}, {\bf u}_{xt} \in \mathcal{C}^0([0,T]; L^2(\mathcal{M'})^2),
\end{equation}
so that the first of the conditions (\ref{u0-reg}) is fulfilled.

Before proving that ${\bf u} \in \mathcal{C}^0([0,T]; H^2(\mathcal{M'})^2)$, let us first prove that \linebreak ${\bf u} \in \mathcal{C}^0([0,T]; H^1(\mathcal{M'})^2)$. In view of (\ref{4.14a}) we only need to prove that ${\bf u}_{y} \in \mathcal{C}^0([0,T]; L^2(\mathcal{M'})^2)$. Since $(u,v,\phi)$ satisfy (\ref{mode_zero-reg}), we obtain from (\ref{mode_zero-reg})$_2$ that $\phi_{y} \in \mathcal{C}^0([0,T]; L^2(\mathcal{M'})^2)$.

Now, using (\ref{mode_zero-reg})$_{1,2}$, we deduce from the hypotheses $(\mathscr{H}_0)$ that $\phi_{x}, \phi_{y} \in$ \linebreak $ \mathcal{C}^0([0,T]; L^2(\mathcal{M'}))$. Thanks to (\ref{mode_zero-reg})$_{3}$, we infer that $v_{y}=-u_x \in \mathcal{C}^1([0,T]; L^2(\mathcal{M'}))$. Thus, it only remains to prove that $u_{y} \in \mathcal{C}^0([0,T]; L^2(\mathcal{M'}))$. For that, we rewrite (\ref{mode_zero-reg})$_{1}$ differentiated in $y$ as a transport equation for $\theta =u_y$ which reads as follows:
\begin{equation}\label{transport-eq}
\left\{
\begin{array}{l}
\theta_{t} + \bar{U}_0 \theta_{x} = f v_y - \phi_{xy}  + F_{uy}, \ \text{in} \ (0,T)\times(0,L_1),\\
\theta \ \text{is periodic in} \ x,\\
\theta|_{t=0}=\tilde{{\bf u}}_y.
\end{array}
\right.
\end{equation}
The solution of (\ref{transport-eq}) is classically obtained by integrating along the characteristics of this hyperbolic equation. Hence, the regularity of $\theta$ is exactly the same as that of the RHS in (\ref{transport-eq})$_{1}$ and for the initial data (\ref{transport-eq})$_{3}$ combined together. We obtain that $\phi_{xy} \in \mathcal{C}^0([0,T]; L^2(\mathcal{M'}))$ by using the hypotheses $(\mathscr{H}_0)_x$. Therefore, we conclude that $u_{y} \in \mathcal{C}^0([0,T]; L^2(\mathcal{M'}))$ by assuming $(\mathscr{H}_0), (\mathscr{H}_0)_x$ and that ${\bf \tilde{u}}_y \in L^2(\mathcal{M'})$.

Finally the whole solution of (\ref{mode_zero-reg}) satisfies ${\bf u} \in \mathcal{C}^0([0,T]; H^1(\mathcal{M'})^2)$ and $\phi \in \mathcal{C}^0([0,T]; H^1(\mathcal{M'}))$.

Now in order to prove that ${\bf u} \in \mathcal{C}^0([0,T]; H^2(\mathcal{M'})^2)$, we must prove that
\begin{equation}
{\bf u}_{xx}, {\bf u}_{xy}, {\bf u}_{yy} \in \mathcal{C}^0([0,T]; L^2(\mathcal{M'})^2).
\end{equation}
The property is already known for ${\bf u}_{xx}$ and we obtain it as before for ${\bf u}_{xy}$ by assuming $(\mathscr{H}_0)_{xx}$ and that ${\bf \tilde{u}}_{xy} \in L^2(\mathcal{M'})^2$. At this stage by differentiating (\ref{mode_zero-reg}) in $y$, we find that $\phi_{xy}, \phi_{yy} \in \mathcal{C}^0([0,T]; H^1(\mathcal{M'}))$, and $\phi_{xx} \in \mathcal{C}^0([0,T]; H^1(\mathcal{M'}))$ just by $(\mathscr{H}_0)_{x}$.
Hence $\phi \in \mathcal{C}^0([0,T]; H^2(\mathcal{M'}))$. The incompressibility equation differentiated in $y$ gives $v_{yy} \in \mathcal{C}^0([0,T]; L^2(\mathcal{M'}))$. For $u_{yy}$ we proceed as for $u_{y}$, differentiating  (\ref{mode_zero-reg})$_1$ twice in $y$. The result follows by assuming in addition that ${\bf \tilde{u}}_{yy} \in L^2(\mathcal{M'})$.

The Proposition \ref{prop-reg-mode0} is proved.
\end{proof}
\begin{rem}
Note that the proof of higher regularity in $x$ and $t$ can be done recursively thanks to the invariance property and the fact that each time the initial data are deduced from (\ref{mode_zero-reg})$_{1,2}$. However, the system (\ref{mode_zero-reg})-(\ref{mode_zero-reg-bc}) is not invariant under differentiation w.r.t $y$, but, we can continue to show the regularity w.r.t. $y$ by differentiating the equation in $y$ enough times and adding some hypotheses for the derivatives of the initial data as for  (\ref{transport-eq}).
\end{rem}

\section{Existence result for the (full) system (\ref{model0})}\label{sec4}

We investigate in this section the existence of solutions of system (\ref{model0}) associated with the initial conditions (\ref{ini_cond_mode}), the $x-$periodicity conditions (\ref{per_bdry_x}) and the $y-$ boundary conditions (\ref{mode_zero_limit_bdry})-(\ref{mode_zero_limit_bdry1}) for the zero mode, and (\ref{zeta})-(\ref{xhi}) for the modes $n\ge1$. To this end, we introduce, as in \cite{rtt08}, the decomposition of the solution $U$ (and its derivatives) in the form  $U=(U^0,U^I)$. Here $U^0$ and $U^I$ stand, respectively, for the zero mode solution of (\ref{mode_zero}) and the whole supercritical solution formed by summing all the $n$th modes, solutions of (\ref{moden}), for all $n \ge 1$. Note that $U^0=(u_0,v_0,0)$ and $U^I=(u^I,v^I,\psi^I)$.

\begin{rem}
Although the computations in this section are very similar to those developed in  \cite{rtt08} and \cite{CST09}, we choose to put them in an abstract form for the reader's convenience; more rigourous details can be found in Subsection 4.1 of \cite{rtt08}; see also \cite{CST09}.
\end{rem}

Now, the existence of solutions for the supercritical modes is proved in Section \ref{sec3} and the zero mode in Section \ref{sec-mode0}. It remains to prove a similar existence result for the concatenated solution $U=(u,v,\psi)$ as defined in (\ref{modal_form}). Hence, we introduce the operator $\mathbb{A}$ and its domain$D(\mathbb{A})$ in $\mathbb{H}$ which reads as below:
\begin{equation}\label{space-glob-1}
\mathbb{H} = H^0 \times H^I, \quad \text{and} \quad D(\mathbb{A}) = D(A^0) \times D(A^I),
\end{equation}
where $H^0$ and $D(A^0)$ are the function spaces related to the mode zero equations, they are given by (\ref{space-glob-2}) and (\ref{domain-A0}), respectively.
However, the function space $H^I$ is simply the orthogonal, in $L^2(\mathcal{M})$, of the space of functions independent of $z$ and periodic in $x$ (as in (\ref{per_bdry_x})), and $D(A^I)$ is the domain of the operator associated with the supercritical modes $n\ge1$,
\begin{equation}\label{space-glob-4}
\begin{array}{c}
D(A^I) = \{U^I=\{U_n\}_{n\ge1}, U_n \in L^2(\mathcal{M'})^3, \mathcal{A}_n U_n \in L^2(\mathcal{M'})^3, n\ge1,\\ U^I \ \text{satisfies} \ (\ref{zeta}) \ \text{and} \ (\ref{xhi}) \ \text{componentwise} \},
\end{array}
\end{equation}
where $\mathcal{A}_n$ is given by (\ref{operator-A}).

We endow the space $\mathbb{H}$ with its natural scalar product and norm defined as follows:
\begin{eqnarray}
(U,U^\sharp)_{\mathbb{H}}&=&((u,v,\psi),(u^\sharp,v^\sharp,\psi^\sharp))_{L^2(\mathcal{M})^3} \nonumber\\
&=& (u,u^\sharp)_{L^2(\mathcal{M})}+(v,v^\sharp)_{L^2(\mathcal{M})}
+\frac{1}{N^2}(\psi,\psi^\sharp)_{L^2(\mathcal{M})},\nonumber\\
|U|_{\mathbb{H}}&=&\left[ (U,U)_{\mathbb{H}} \right]^{1/2}.\nonumber
\end{eqnarray}
In a componentwise formulation as introduced above in the head of this paragraph, the norm in $\mathbb{H}$ can be also expressed as follows:
\begin{displaymath}
|U|_{\mathbb{H}}^2=|{\bf u_0}|^2_{L^2(\mathcal{M'})^2}+\sum^{\infty}_{n=1}|U_n|^2_{{\bf L^2(\mathcal{M'})}},
\end{displaymath}
where we recall again here that $U^0={\bf u_0}=(u_0,v_0)$ and we have $\psi_0 \equiv 0$.\\
The operator $\mathbb{A}$ is acting from its domain $D(\mathbb{A})$ into $\mathbb{H}$  componentwise as $\mathbb{A} U = (A^0 U^0, A^I U^I)$.

We now define the adjoint operator $\mathbb{A}^*$ of $\mathbb{A}$ and prove that $\mathbb{A}^*$ and $\mathbb{A}$ are positive using the previous positivity results already obtained componentwise.

It is easy to see that
\begin{displaymath}
D(\mathbb{A}^*) = D(A^{0*}) \times D(A^{I*}),
\end{displaymath}
where
\begin{displaymath}
D(A^{0*}) = \{{\bf \tilde{u}_0}=(\tilde{u}_0,\tilde{v}_0) \in H^0, \exists \ \Phi \in \mathcal{D}'(\mathcal{M'}) \ \text{s.t.} \ \mathcal{A}_0{\bf \tilde{u}_0}\in H^0 \ \text{and} \ \tilde{v}_0=0 \ \text{at} \ x=L_1 \},
\end{displaymath}
and
\begin{displaymath}
\begin{array}{c}
D(A^{I*}) = \{U^I=\{U_n\}_{n\ge1}, U_n \in L^2(\mathcal{M'})^3, \mathcal{A}_n U_n \in L^2(\mathcal{M'})^3, n\ge1,\\ U^I \ \text{satisfies} \ (\ref{zeta}), (\ref{xhi}) \ \text{and} \ (\ref{per_bdry_x})\}.
\end{array}
\end{displaymath}

We have the following:
\begin{thm}\label{th4.1}
The operator $-\mathbb{A}$ is the infinitesimal generator of a semigroup of contractions in $\mathbb{H}$.
\end{thm}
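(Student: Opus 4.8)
The plan is to invoke the classical Hille--Phillips--Yosida (Lumer--Phillips) theorem: a densely defined, closed operator $-\mathbb{A}$ generates a semigroup of contractions on the Hilbert space $\mathbb{H}$ if and only if both $\mathbb{A}$ and $\mathbb{A}^*$ are positive (accretive) and $\mathbb{A}$ is closed with $D(\mathbb{A})$ dense in $\mathbb{H}$. Everything needed has essentially been assembled componentwise in Sections \ref{sec3} and \ref{sec-mode0}, so the work here is to transfer those facts to the product operator $\mathbb{A} = (A^0, A^I)$ on $\mathbb{H} = H^0 \times H^I$, being careful that the constants in the estimates are uniform in the mode index $n$.

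First I would record that $D(\mathbb{A}) = D(A^0) \times D(A^I)$ is dense in $\mathbb{H}$: density of $D(A^0)$ in $H^0$ was noted after \eqref{domain-A0} (it contains $\mathcal{V}$), and density of $D(A^I)$ in $H^I$ follows since $D(A^I)$ contains all finite sums of smooth mode components, each $D(A^n)$ being dense in $H^n$ by the remark following \eqref{domain-A}. Next, closedness of $\mathbb{A}$: $A^0$ is closed (noted after \eqref{domain-A0}) and each $A^n$ is closed (stated before Theorem \ref{th3.1}); for the infinite product one takes $U^{(k)} = (U^{0,(k)}, \{U_n^{(k)}\}) \to U$ in $\mathbb{H}$ with $\mathbb{A} U^{(k)} \to G$ in $\mathbb{H}$, extracts convergence in each fixed mode, applies closedness mode by mode to get $U_n \in D(A^n)$ with $A^n U_n = G_n$, and then uses $\sum_n |A^n U_n|_{H^n}^2 = |G|_{\mathbb{H}}^2 < \infty$ together with $\mathcal{A}_n U_n \in L^2$ for each $n$ to conclude $U \in D(\mathbb{A})$ and $\mathbb{A} U = G$. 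Then positivity of $\mathbb{A}$ and of $\mathbb{A}^*$ follow termwise: $(\mathbb{A}U, U)_{\mathbb{H}} = (A^0 U^0, U^0)_{H^0} + \sum_{n\ge1}(A^n U_n, U_n)_{H^n} \ge 0$ by Lemma \ref{pos-A0} for the zero mode and Lemma \ref{pos-A} for each $n \ge 1$; the identical argument with Lemma \ref{pos-A*} and the adjoint part of Lemma \ref{pos-A0} gives positivity of $\mathbb{A}^*$, once one checks $D(\mathbb{A}^*) = D(A^{0*}) \times D(A^{I*})$ as displayed before the statement.

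With $\mathbb{A}$, $\mathbb{A}^*$ positive, $\mathbb{A}$ closed, and $D(\mathbb{A})$ dense, the Hille--Phillips--Yosida theorem applies verbatim and yields that $-\mathbb{A}$ generates a contraction semigroup on $\mathbb{H}$; concretely one verifies that $\mathbb{A} + I$ is onto $\mathbb{H}$ (solving $(\mathbb{A}+I)U = G$ mode by mode, where solvability in each mode is guaranteed by positivity plus closedness of $A^n$ and $(A^n)^*$, exactly as in Theorem \ref{th3.1}), and that $|(\mathbb{A}+I)U|_{\mathbb{H}} \ge |U|_{\mathbb{H}}$, which is immediate from $(\mathbb{A}U,U)_{\mathbb{H}} \ge 0$.

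\emph{Main obstacle.} The only genuinely nontrivial point is the \emph{uniformity in $n$} of all estimates, so that the concatenation does not destroy the contraction property or the range condition. This is precisely why Remark \ref{remark4.1} was inserted: the bounds \eqref{bound1}--\eqref{bound2} have a constant $\kappa$ independent of $n$, and one must check that the resolvent estimate $|(A^n + I)^{-1}|_{\mathcal{L}(H^n)} \le 1$ holds with the same constant for every $n$ (true, since positivity of $A^n$ gives it with constant $1$ for each $n$ separately, and $1$ does not depend on $n$). I would therefore organize the proof so that the per-mode contraction estimate is extracted with an $n$-independent constant first, and only afterwards pass to the $\ell^2$-sum over modes; the measurability/summability bookkeeping for $\{U_n\}_{n\ge1}$ in $H^I$ is routine once that uniformity is in hand.
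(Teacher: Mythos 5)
Your proposal is correct and follows essentially the same route as the paper: verify density of $D(\mathbb{A})$ and $D(\mathbb{A}^*)$, closedness, and positivity of $\mathbb{A}$ and $\mathbb{A}^*$ componentwise (Lemma \ref{pos-A0} for the zero mode, Lemmas \ref{pos-A} and \ref{pos-A*} for $n\ge 1$), then invoke the Hille--Phillips--Yosida characterization. Your extra care about the mode-by-mode closedness argument, the range condition for $\mathbb{A}+I$, and the $n$-independence of the resolvent bound only fleshes out details the paper treats briefly (and, for the uniformity in $n$, defers to Remark \ref{remark4.1}).
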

\begin{proof}
According to the well-known characterization of a semigroup of contractions (see e.g. \cite{yosida}), it suffices to prove the density of the spaces $D(\mathbb{A})$ and $D(\mathbb{A}^*)$ in $\mathbb{H}$ and the closure and the positivity of the operator $\mathbb{A}$ and its adjoint $\mathbb{A}^*$, namely
\begin{eqnarray}
&(\mathbb{A} U, U)_{\mathbb{H}} \ge 0, \quad \forall \, U \in D(\mathbb{A}),&\\
&(\mathbb{A}^* U, U)_{\mathbb{H}} \ge 0, \quad \forall \, U \in D(\mathbb{A}^*).&
\end{eqnarray}
First, the density of $D(\mathbb{A})$ and $D(\mathbb{A}^*)$ in $\mathbb{H}$ is straightforward and implies in addition the closure of the operators $\mathbb{A}$ and $\mathbb{A}^*$. Indeed, we proceed componentwise: for $D(A^0)$, it is already known that the space of $\mathcal{C}^{\infty}$ functions ${\bf u^0}=(u^0,v^0)$ which are periodic in $x$ and with compact support in $(0,L_2)$ and such that $u^0_x+v^0_y=0$ is dense in $H^0$. For $D(A^I)$, similarly, the space of $\mathcal{C}^{\infty}$ functions with compact support in $\mathcal{M'}$ is dense in $L^2(\mathcal{M'})^3$. Moreover, the positivity is guaranteed thanks to the positivity of the operators corresponding to each mode; the mode zero in Lemma \ref{pos-A0}  and the modes $n \ge 1$ in Lemmas \ref{pos-A} and \ref{pos-A*}.
\end{proof}
By setting as before $\mathbb{U}=(u, v,\psi), \mathbb{F}=(F_u, F_v,F_\psi)$ and $\tilde{\mathbb{U}}=(\tilde{u}, \tilde{v}, \tilde{\psi})$, we may rewrite the system
(\ref{model0}) in an abstract form as follows:
\begin{equation}\label{evolution}
\left\{\begin{array}{l}\displaystyle{\frac{d \mathbb{U}}{d t} + \mathbb{A} \mathbb{U}= \mathbb{F},}\\
\mathbb{U}(0)=\tilde{\mathbb{U}},
\end{array}\right.
\end{equation}
where $\mathbb{A}$ is an unbounded linear operator from  $D(\mathbb{A}) \subset \mathbb{H}$ into the Hilbert space $\mathbb{H}$ defined above (see (\ref{space-glob-1})---(\ref{space-glob-4})).

A straightforward consequence of Theorem \ref{th4.1} is the following corollary which summarizes the existence result for the system (\ref{model0}).

\begin{cor}\label{corol}
For given $\mathbb{F}=(F_u, F_v, F_{\psi})^T$ such that $\mathbb{F} \in \mathcal{C}^1([0,T];{\mathbb{H}})$ and $ \tilde{\mathbb{U}}= \{\tilde{u}_n\}_{n\in \en} \in D(\mathbb{A})$, there exists a unique solution $\mathbb{U}=(u, v, {\psi})$ to the system (\ref{model0}) (or equivalently (\ref{evolution})) with
\begin{align}\label{exist-sol-glob}
\left\{%
\begin{array}{c}
\mathbb{U} \in \mathcal{C}^1([0,T];{\mathbb{H}}) \cap \mathcal{C}^0([0,T];D(\mathbb{A}),\\
\displaystyle{\frac{\ud \mathbb{U}}{\ud t}\in \mathcal{C}^0([0,T];\mathbb{H}), \ \forall \ T>0}.
\end{array}%
\right.
\end{align}
\end{cor}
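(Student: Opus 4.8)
The plan is to derive Corollary \ref{corol} directly from Theorem \ref{th4.1} together with the classical Hille--Phillips--Yosida theory for inhomogeneous Cauchy problems, exactly as was done mode-by-mode in Theorem \ref{th3.1} and Proposition \ref{exist-mode0}. Since $-\mathbb{A}$ generates a strongly continuous semigroup of contractions $\{S(t)\}_{t\ge 0}$ on $\mathbb{H}$, and since $\mathbb{F}\in\mathcal{C}^1([0,T];\mathbb{H})$ and $\tilde{\mathbb{U}}\in D(\mathbb{A})$, the standard theory (see e.g. \cite{yosida}, or the reasoning preceding Theorem \ref{th3.1}) gives a unique classical solution $\mathbb{U}$ of (\ref{evolution}) represented by the variation-of-constants formula
\begin{equation}\label{duhamel}
\mathbb{U}(t)=S(t)\tilde{\mathbb{U}}+\int_0^t S(t-s)\mathbb{F}(s)\,\ud s,\qquad 0\le t\le T,
\end{equation}
which satisfies (\ref{exist-sol-glob}). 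Uniqueness is immediate: the difference of two solutions solves the homogeneous equation with zero data, and the contraction property of $S(t)$ (equivalently, the positivity $(\mathbb{A} U,U)_{\mathbb{H}}\ge 0$) forces it to vanish.

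The main content therefore is to verify that the solution produced abstractly on $\mathbb{H}$ coincides with the concatenation of the modal solutions, and that the sum over modes actually converges in the spaces listed in (\ref{exist-sol-glob}). First I would record that $\mathbb{A}=(A^0,A^I)$ is block-diagonal with respect to the orthogonal splitting $\mathbb{H}=H^0\times H^I$ and, within $H^I$, with respect to the modal decomposition (\ref{modal_form}); hence $S(t)=(S^0(t),S^I(t))$ and $S^I(t)$ acts on $U^I=\{U_n\}_{n\ge 1}$ componentwise as the semigroup $S_n(t)$ generated by $-A_n$. Decomposing $\mathbb{F}$ and $\tilde{\mathbb{U}}$ into modes, formula (\ref{duhamel}) then reduces on each component to the Duhamel formula for the $n$th mode, whose solution is exactly the $U_n$ of Theorem \ref{th3.1} (for $n\ge1$) and Proposition \ref{exist-mode0} (for $n=0$). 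Thus $\mathbb{U}=\sum_{n}\mathcal{U}_n(z)U_n$ in the sense of (\ref{modal_form}), and one recovers a genuine solution of (\ref{model0}) by reinstating $\phi_n=-\lambda_n^{-1}\psi_n$ and $w_n=-\lambda_n^{-1}(u_{nx}+v_{ny})$ as in Remark \ref{rem21}.

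The one point that requires a little care — and the step I expect to be the main obstacle — is the convergence of the modal series in $\mathcal{C}^0([0,T];D(\mathbb{A}))$ and in $\mathcal{C}^1([0,T];\mathbb{H})$, so that the abstract regularity (\ref{exist-sol-glob}) is not vacuous. This is precisely where the uniform-in-$n$ estimates of Remark \ref{remark4.1} enter. The bound (\ref{bound2}) controls $\sum_n|U_n(t)|_H^2$ in terms of $|\tilde{\mathbb{U}}|_{\mathbb{H}}^2$, while (\ref{bound1}), together with the observation from Remark \ref{rem-reg-mode0} for the zero mode, controls $\sum_n|U_n'(t)|_H^2$ — hence, via the equation $A_nU_n=F_n-U_n'$, also $\sum_n|A_nU_n(t)|_H^2=|\mathbb{A}\mathbb{U}(t)|_{\mathbb{H}}^2$ — uniformly on $[0,T]$, using that the data $\tilde{\mathbb{U}}\in D(\mathbb{A})$ and $\mathbb{F}\in\mathcal{C}^1([0,T];\mathbb{H})$ supply exactly the finite quantities $|\mathbb{A}\tilde{\mathbb{U}}|_{\mathbb{H}}$, $|\mathbb{F}(0)|_{\mathbb{H}}$ and $\int_0^T|\mathbb{F}'(s)|_{\mathbb{H}}^2\,\ud s$ appearing on the right-hand sides. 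Continuity in $t$ of these sums follows from the strong continuity of each $S_n(t)$ plus the dominated-convergence argument permitted by the uniform tail bounds. Assembling these pieces gives $\mathbb{U}\in\mathcal{C}^1([0,T];\mathbb{H})\cap\mathcal{C}^0([0,T];D(\mathbb{A}))$ and $\ud\mathbb{U}/\ud t\in\mathcal{C}^0([0,T];\mathbb{H})$, which is (\ref{exist-sol-glob}), and completes the proof of the corollary.
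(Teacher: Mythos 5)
Your proof is correct and follows essentially the same route as the paper: the corollary is obtained directly from Theorem \ref{th4.1} (generation of a contraction semigroup by $-\mathbb{A}$) plus the standard Hille--Phillips--Yosida/Duhamel theory for the inhomogeneous problem with $\mathbb{F}\in\mathcal{C}^1([0,T];\mathbb{H})$ and $\tilde{\mathbb{U}}\in D(\mathbb{A})$, which is exactly how the paper presents it as a ``straightforward consequence.'' Your additional verification that the abstract solution coincides with the concatenated modal solutions and that the series converges (via the uniform-in-$n$ estimates of Remark \ref{remark4.1}) is detail the paper leaves implicit in the componentwise definition of $\mathbb{A}$ on $\mathbb{H}$, and it is consistent with the remark following the corollary.
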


We recall here that the estimates (\ref{bound1})-(\ref{bound2}), obtained for each mode solution, guarantee that similar estimates to  (\ref{bound1})-(\ref{bound2}) hold for the solution $\mathbb{U}$ of (\ref{evolution}) provided that $\tilde{\mathbb{U}} \in D(\mathbb{A}) \cap H^1(\mathcal{M})^3$ and  $\mathbb{F} \in \mathcal{C}^1([0,T]; L^2(\mathcal{M})^3)$.

\section{Further regularity for the limit problem
(\ref{model0})}\label{sec6} We establish in this section some further
 regularity results related to the solution $(u, v,w,
\phi,\psi)$  of the limit system (\ref{model0}) in view of reaching the regularity hypothesis assumed and used in \cite{HJT}, see (\ref{a-regular}) below. As mentioned before,  it is sufficient to look for the regularity of the components  $u, v$ and $\psi$ as the other components $\phi$ and $\psi$ can be simply deduced using (\ref{model0})$_{4,5}$ as we did in Remark \ref{rem21}. Furthermore, we notice that the space $\mathbb{H}$ is a subspace of $L^2(\mathcal{M})^3$ which is isomorphic to $L^2(\mathcal{M})^3$. Indeed, each element of $L^2(\mathcal{M})^3$ can be written using the modal decomposition (\ref{modal_form})-(\ref{modal_form_b}) as an element of $\mathbb{H}$, see \cite{rtt08} for more details.

We start by recalling the system (\ref{model0}) and especially we write it in a nonlocal form regarding some terms in the equations and we recall also the boundary conditions that we already introduced in Section \ref{sec2}. More precisely, we have
\begin{align}\label{model0-reg}
\left\{%
\begin{array}{l}
u_t + \bar{U}_0u_x - f v -\int_z^0 \psi_x (x,z') \ud z'] = F_u,\\
v_t + \bar{U}_0 v_x + f u -\int_z^0 \psi_y (x,z^{'}) \ud z']   =
F_v,\\
\psi_t + \bar{U}_0\psi_x + N^2 \int_z^0 [u_x (x,z')+v_y (x,z')] \ud z'  = F_{\psi},
\end{array}%
\right.
\end{align}
for which we associated the boundary conditions (\ref{bc}) given mode by mode, that we rewrite in the nonlocal form, $\forall \ t \in (0,T), \ \forall \ x \in (0,L_1),$
\begin{equation}\label{bc-global}
\left\{\hspace{-.2cm}
\begin{array}{l}
\displaystyle{\int_{-L_3}^0 v(t;x,0,z)\mathcal{U}_n (z) \ud z -
\frac{1}{N}\int_{-L_3}^0 \psi(t;x,0,z)\mathcal{W}_n
(z) \ud z=0,}\vspace{.2cm}\\
\displaystyle{\int_{-L_3}^0 v(t;x,L_2,z)\mathcal{U}_n (z) \ud z +
\frac{1}{N}\int_{-L_3}^0 \psi(t;x,L_2,z)\mathcal{W}_n (z) \ud z=0,}
\end{array}\right.
\end{equation}
together with the $z-$ boundary conditions (\ref{z-bc}), the periodicity condition in $x$ (see (\ref{per_bdry_x})) and the initial data written mode by modes, that is (\ref{ini_cond_mode}). Note that the boundary conditions (\ref{bc-global}) are valid for all the modes $n \in \en$  including the zero mode since $\psi_0 \equiv 0$.\\

%However, for the zero mode, we simply use the regularity results already obtained in \cite[Theorem 04]{ht07} for the solution $V^0$ and then deduce the regularity for  $U^0$.
%the regularity of the global solution depends only on the regularity of the $n${\it th} mode solution, $n \ge 1$. For that, we will study the regularity of each mode. Therefore, we omit again the subscript $n$ and

%Hence,  recall the system satisfied by the solution $U=(u,v,\psi)$ which is given by (\ref{mode-n}) associated with the boundary conditions (\ref{bc}) and the initial data (\ref{ini_cond_mode}) ($n \ge 1$).

%Here we will make use of the abstract form (\ref{abstract-pb}) and its existence result stated in Theorem \ref{th3.1}.

For every $\mathbb{U} \in D(\mathbb{A})$, $\mathbb{A} \mathbb{U} =\mathbb{\mathcal{A}} \mathbb{U}=(A^0 U^0, A^I U^I) \in \mathbb{H}$ is defined as follows:
\begin{equation}\label{operator}
A^0 U^0 =\mathbb{P}_{H^0}\left[\bar{U}_0 U^0_x + f (U^0)^{\perp}
+ \nabla \Phi^0\right],
\end{equation}
and
\begin{equation}\label{operator}
A^I U^I =\left(\begin{array}{l}\bar{U}_0u^I_x - f v^I
-\int_z^0 \psi^I_x (x,z') \ud z'\\
\bar{U}_0 v^I_x + f u^I-\int_z^0 \psi^I_y (x,z') \ud z'\\
 \bar{U}_0\psi^I_x + N^2 \int_z^0 [u^I_x (x,z')+v^I_y (x,z') \ud z'
\end{array}\right),
\end{equation}
where $\mathbb{P}_{H^0}$ is the orthogonal projector from $(L^2(\mathcal{M}))^2$ onto $H^0$.\\
Now, by Corollary \ref{corol}, we have the
existence and uniqueness of a solution $\mathbb{U}$ of (\ref{evolution}) satisfying (\ref{exist-sol-glob}) for which we will prove in the subsequent some additional regularity results.

We arrive now at our final (and main) aim in this article which is to prove the following regularity properties (\ref{a-regular}) of the solution $\mathbb{U}$ of equations (\ref{model0-reg}), (\ref{bc-global}), an assumption which was made in \cite{HJT}:
\begin{align}\label{a-regular}
\left\{
\begin{array}{l}
\mathbb{U}_{xt} \in L^{\infty}(0,T; \mathbb{H}),\\
\mathbb{U} \in L^{\infty}(0,T; H^2(\mathcal{M})),\\
\mathbb{U}_{tz}, \mathbb{U}_{xxz}, \mathbb{U}_{txz} \in L^{\infty}(0,T; \mathbb{H}),\\
\mathbb{U}_{xzz} \in L^{\infty}(0,T; \mathbb{H}).\\
\end{array}\right.
\end{align}

The proof of (\ref{a-regular}) will necessitate numerous regularity hypotheses on the data  $\mathbb{F}$ and $\tilde{\mathbb{U}}=\mathbb{U}(0)$. To state them as simply as possible, we will call $(\mathscr{H})$ the set of hypotheses of Corollary \ref{corol}, namely
\begin{displaymath}
(\mathscr{H}) \quad \left\{%
\begin{array}{l}
\mathbb{F} \in \mathcal{C}^1([0,T];{\mathbb{H}}), \\
\tilde{\mathbb{U}} \in D(\mathbb{A}) \cap H^1(\mathcal{M})^3.
\end{array}%
\right.
\end{displaymath}
With an obvious notation, we will call $\mathscr{H}_x$ or $\mathscr{H}_z$, etc., the same hypotheses made on $\mathbb{F}_x, \tilde{\mathbb{U}}_x$ or $\mathbb{F}_z, \tilde{\mathbb{U}}_z$. A little more delicate is the hypothesis $\mathscr{H}_t$ for $\mathbb{U}_t$. Namely $\mathscr{H}_t$ means that $\mathbb{F}_t \in \mathcal{C}^1([0,T];{\mathbb{H}})$ so that $\mathbb{F} \in \mathcal{C}^2([0,T];{\mathbb{H}})$ and $\mathbb{U}_t(t=0) \in D(\mathbb{A}) \cap H^1(\mathcal{M})^3$. But $\mathbb{U}_t(t=0)$ is given by equation (\ref{evolution}) written at time $t=0$, and the condition on $\mathbb{U}_{t}(0)$ reads:
\begin{align}
\mathbb{U}_{t}(0) = - \mathbb{A} \tilde{\mathbb{U}} + \mathbb{F}(0) \in D(\mathbb{A}) \cap H^1(\mathcal{M})^3.
\end{align}

We now state the main regularity result.
\begin{thm}\label{a-prop}
With the notation above, we assume
\begin{align}
\left\{(\mathscr{H}), (\mathscr{H}_{x}), (\mathscr{H}_{xx}), (\mathscr{H}_{xxx}), (\mathscr{H}_{tx}), (\mathscr{H}_{txx}), (\mathscr{H}_{zz}), (\mathscr{H}_{xzz}), (\mathscr{H}_{txzz}), (\mathscr{H}_{xxzz})\right\}.
\end{align}
Then, the solution $\mathbb{U}$ of  (\ref{model0-reg})-(\ref{bc-global}) satisfies (\ref{a-regular}).
\end{thm}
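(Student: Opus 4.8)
The plan is to prove (\ref{a-regular}) by exploiting systematically the \emph{invariance} of the abstract system (\ref{evolution}) under differentiation in the tangential directions $x$, $z$ and $t$, together with the vertical integrations built into (\ref{model0-reg}), and finally closing the remaining normal direction $y$ by a transport--equation argument exactly as in Proposition \ref{prop-reg-mode0}. First I would observe that, because the operators $\mathbb{A}$, $A^0$, $A^I$ have constant coefficients in $x$, $z$ and $t$ and the domains $D(\mathbb{A})$, $D(\mathbb{A}^*)$ are defined by boundary conditions (\ref{bc-global}), (\ref{z-bc}), (\ref{per\_bdry\_x}) that are themselves invariant under $\partial_x$, $\partial_z$, $\partial_t$, any tangential derivative $\mathbb{U}_{x^k z^l t^m}$ solves the \emph{same} evolution problem (\ref{evolution}) with data $\mathbb{F}_{x^k z^l t^m}$ and $\tilde{\mathbb{U}}_{x^k z^l t^m}$, the latter being read off from (\ref{evolution}) at $t=0$ whenever a $t$-derivative is involved (as already noted for $\mathbb{U}_t(0)$). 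Applying Corollary \ref{corol} to each of these, under the corresponding hypothesis $(\mathscr{H}_{x^k z^l t^m})$ in the assumed list, immediately gives $\mathbb{U}_{xt}$, $\mathbb{U}_{tz}$, $\mathbb{U}_{xxz}$, $\mathbb{U}_{txz}$, $\mathbb{U}_{xzz} \in \mathcal{C}^0([0,T];\mathbb{H}) \subset L^\infty(0,T;\mathbb{H})$, i.e. the first, third and fourth lines of (\ref{a-regular}) and part of the second.

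Next I would upgrade these $\mathbb{H}$-bounds into full $H^2(\mathcal{M})$ regularity, which is the content of line two of (\ref{a-regular}). The point is that $\mathbb{H}$-membership of $\mathbb{A}\mathbb{U}$-type quantities already controls all derivatives \emph{except} those purely in $y$: from $\mathbb{A}\mathbb{U} \in \mathbb{H}$ one gets $\mathbb{U}_x$ and the vertical integrals of $\psi_y$, $u_x+v_y$ in $L^2$; differentiating the system in $z$ and using (\ref{z-bc}) recovers $\psi_y$, $u_x+v_y \in L^2$ directly (the integral $\int_z^0\cdot\,\ud z'$ becomes the identity after one $\partial_z$); combining with the already-established $\mathbb{U}_x$, $\mathbb{U}_{xx}$, $\mathbb{U}_{xz}$, $\mathbb{U}_{zz}$ bounds leaves only $u_y$, $u_{yy}$, $v_{yy}$, $\psi_{yy}$ unaccounted for. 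For these I would proceed mode-wise --- or equivalently on the nonlocal system --- differentiating the first scalar equation of (\ref{model0-reg}) in $y$ (and twice in $y$) to obtain a transport equation for $\theta = u_y$ (resp. $u_{yy}$) of the form $\theta_t + \bar{U}_0\theta_x = (\text{known }\mathcal{C}^0([0,T];L^2)\text{ terms}) + F_{uy}$, periodic in $x$, with initial datum $\tilde{u}_y$ (resp. $\tilde{u}_{yy}$), solve it along characteristics, and read off $\theta \in \mathcal{C}^0([0,T];L^2(\mathcal{M}))$ provided $\tilde{\mathbb{U}}_y, \tilde{\mathbb{U}}_{yy} \in L^2$ (which is contained in $\tilde{\mathbb{U}} \in H^2$, itself forced by the data hypotheses). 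The incompressibility relation $v_{yy} = -u_{xy}$ then comes for free. This yields $\mathbb{U} \in \mathcal{C}^0([0,T];H^2(\mathcal{M}))$ and, along the way, $\phi, \psi \in H^2$ via (\ref{model0})$_{4,5}$.

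The main obstacle is bookkeeping rather than a genuine analytic difficulty: one must verify that the specific list of hypotheses $\{(\mathscr{H}),(\mathscr{H}_{x}),(\mathscr{H}_{xx}),(\mathscr{H}_{xxx}),(\mathscr{H}_{tx}),(\mathscr{H}_{txx}),(\mathscr{H}_{zz}),(\mathscr{H}_{xzz}),(\mathscr{H}_{txzz}),(\mathscr{H}_{xxzz})\}$ is exactly what is consumed --- in particular that the transport--equation step for $u_y, u_{yy}$ needs the $x$-regularity of $\phi_{xy}$, hence $(\mathscr{H}_{xx})$ and $(\mathscr{H}_{xxx})$, while the $z$-differentiation step needs $(\mathscr{H}_{zz})$ and its $x$-, $t$-companions $(\mathscr{H}_{xzz})$, $(\mathscr{H}_{txzz})$, $(\mathscr{H}_{xxzz})$ to produce the mixed quantities $\mathbb{U}_{tz}$, $\mathbb{U}_{xxz}$, $\mathbb{U}_{txz}$, $\mathbb{U}_{xzz}$ in $\mathbb{H}$. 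A secondary subtlety is that the non-local boundary conditions (\ref{bc-global}) must be checked to be preserved under each differentiation: this is immediate for $\partial_x$ and $\partial_t$ (they act only on arguments not integrated against $\mathcal{U}_n,\mathcal{W}_n$), and for $\partial_z$ one uses that $\partial_z$ of the solution still lies in the $z$-boundary-condition class (\ref{z-bc}) so that the modal expansion, hence (\ref{bc-global}), remains meaningful. Once these verifications are assembled, collecting the five displayed regularities of (\ref{a-regular}) is immediate, and the theorem follows.
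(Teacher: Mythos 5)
Your overall strategy (differentiate the abstract problem in the ``invariant'' directions, then close the $y$-direction by the transport/characteristics argument mode by mode, as in Proposition \ref{prop-reg-mode0}) is the right skeleton, but there is a genuine gap at the $z$-differentiation step. You assert that the problem (\ref{evolution}) is invariant under a \emph{single} $\partial_z$, that ``$\partial_z$ of the solution still lies in the $z$-boundary-condition class (\ref{z-bc})'', and that the vertical integral ``becomes the identity after one $\partial_z$''. None of this holds in the functional framework of the paper. The conditions (\ref{z-bc}) are not preserved by $\partial_z$: $\psi=0$ at $z=0,-L_3$ does not give $\psi_z=0$ there, and $u_z=v_z=0$ at $z=0,-L_3$ turns into Dirichlet-type conditions for $u_z,v_z$; equivalently, $\partial_z$ exchanges the cosine basis $\mathcal{U}_n$ and the sine basis $\mathcal{W}_n$, so $\mathbb{U}_z$ does not live in $\mathbb{H}=H^0\times H^I$ with the same modal structure and the nonlocal boundary conditions (\ref{bc-global}) are not reproduced. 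Moreover, differentiating the nonlocal terms in $z$ does not commute with the operator: e.g. $\partial_z\int_z^0(u_x+v_y)\,\ud z'=-(u_x+v_y)(z)$ while $\int_z^0\partial_{z'}(u_x+v_y)\,\ud z'$ differs from it by the trace $(u_x+v_y)(z=0)$, which does not vanish. This is exactly why the paper differentiates \emph{twice} in $z$ (which preserves the $\cos/\sin$ structure and (\ref{z-bc})), obtains a system for $\mathbb{U}_{zz}$ with a modified operator $\mathbb{A}^{\flat}$ whose discrepancy with $\mathbb{A}$ is $z$-independent and is removed by applying the projector $\mathbb{P}_H$ (see (\ref{mod-evolution})--(\ref{mod-evolution-bis})), and only then recovers the single-$z$ derivatives $\mathbb{U}_{tz},\mathbb{U}_{xz},\mathbb{U}_{txz},\mathbb{U}_{xxz}$ of (\ref{a-regular})$_3$ by \emph{interpolation in $z$} between the no-$z$-derivative and the $zz$-derivative families. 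Your plan to get $\mathbb{U}_{tz},\mathbb{U}_{xxz},\mathbb{U}_{txz}$ by directly applying Corollary \ref{corol} to $\mathbb{U}_z$ also does not match the hypothesis list of the theorem, which deliberately contains only the double-$z$ hypotheses $(\mathscr{H}_{zz}),(\mathscr{H}_{xzz}),(\mathscr{H}_{txzz}),(\mathscr{H}_{xxzz})$ and no single-$z$ ones.

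A second, smaller omission: for the $H^2(\mathcal{M})$ claim (\ref{a-regular})$_2$ you account for $u_y,u_{yy},v_{yy},\psi_{yy}$ via the transport equation, but you never produce the mixed derivative $\mathbb{U}_{yz}$. In the paper this is again obtained by interpolation in $z$, between $\mathbb{U}_y$ (coming from $(\mathscr{H}_x),(\mathscr{H}_{tx})$ through the $y$-regularity argument) and $\mathbb{U}_{yzz}$ (coming from $(\mathscr{H}_{xxzz}),(\mathscr{H}_{txzz})$ applied to the $zz$-differentiated system). Your $x$-, $t$- differentiation steps and the mode-by-mode $y$-argument (zero mode via Proposition \ref{prop-reg-mode0}, modes $n\ge1$ via the equations and a transport equation for $u_{ny},u_{nyy}$) do agree with the paper; the proof needs to be repaired only where single $z$-derivatives are concerned, by replacing direct $\partial_z$-differentiation with the double-differentiation-plus-projection argument and $z$-interpolation.
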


\begin{proof}{}
First, we note that the equations and boundary conditions
satisfied by $\mathbb{U}$, namely (\ref{model0-reg}) and (\ref{bc-global}), are invariant by differentiation in time and in $x$. They are also invariant by {\it double} differentiation in $z$
except, as we will see below, for the appearance of unimportant lower order terms. Moreover, the initial conditions (\ref{evolution})$_2$ (or (\ref{ini_cond_mode})) are invariant with respect to the differentiation in $x$ and $z$ and provided of course that we have the necessary regularity while differentiating with respect to theses variables.

To show the regularity of the global solution $\mathbb{U}$, solution of (\ref{model0-reg}), we will need to prove the componentwise regularity for each mode for the quantities appearing in (\ref{a-regular}). However, we begin by proving the properties that are the same for all the modes. For that, we will first focus on (\ref{a-regular})$_{1}$ which will be useful for (\ref{a-regular})$_{3}$. For (\ref{a-regular})$_{1}$, it suffices to observe that by differentiation in $x$ of
(\ref{evolution}), $\mathbb{U}_{x}$ satisfies the same equation as $\mathbb{U}$,
and therefore $(\mathscr{H}_x)$ yields for $\mathbb{U}_x$ the conclusions similar to (\ref{exist-sol-glob}) for
$\mathbb{U}$, that is
\begin{equation}\label{conc-reg}
\mathbb{U}_{x} \in \mathcal{C}^1([0,T]; \mathbb{H}) \cap \mathcal{C}^0([0,T]; D(\mathbb{A})), \quad
\textnormal{and} \quad \mathbb{U}_{xt} \in \mathcal{C}^0([0,T]; \mathbb{H}).
\end{equation}

In view of proving (\ref{a-regular})$_{2}$ we first observe that, in a similar way $(\mathscr{H}_{xx})$ implies that $\mathbb{U}_{xx} \in \mathcal{C}^0([0,T];L^2(\mathcal{M})^3)$. This gives the first term in the $H^2-$ regularity assertion (\ref{a-regular})$_{2}$. Then, we prove the regularity of $\mathbb{U}$ w.r.t. the $y$ variable. For that, we go back to the modal decomposition of the solution of (\ref{model0-reg}), mode by mode. First, we start by the mode zero for which we already proved Proposition \ref{prop-reg-mode0} which showed all the needed regularity for this mode.

%First, we start by the mode zero, solution of (\ref{mode_zero}), for which we already know that $U^0=(u_0,v_0) \in D(A^0)$ thanks to (\ref{conclus}). Combining the invariance of the zero mode equations (\ref{mode_zero}) and the boundary and initial conditions (\ref{per_bdry_x})--(\ref{mode_zero_limit_bdry1}) and (\ref{ini_cond_mode}) w.r.t. the $x$-differentiation, the hypotheses $(i)$ and $(ii)$ and the regularity result (\ref{conclus}), we conclude in particular that $v_{0y} \in L^2(\mathcal{M'})$. Now, from the incompressibility condition (\ref{mode_zero})$_3$, we infer that $v_{0y|_{t=0}} \in D(A^0)$. Therefore, using the invariance of the equations (\ref{mode_zero}) by differentiation w.r.t. to $y$, $(ii)$ and the existence results for the nonhomogeneous boundary conditions case studied in \cite[Section 4.2]{rtt08}, we deduce that the existence results for the zero mode $U^0$ hold also for $U^0_y$ of which the boundary conditions are nonhomogeneous. Thus, we have $u_{0y}, v_{0yy} \in L^2(\mathcal{M'})$.
%
%Now, from the equation (\ref{mode_zero})$_1$ which we differentiate twice in $y$, we obtain that $u_{0yy} \in L^2(\mathcal{M'})$ since $u_{0yy}$ satisfies in this case a transport equation with periodic boundary conditions of which the right-hand side depends on the second derivatives in $y$ of $v_{0}$, the pressure $\Phi_0$ and the function source $F_{u_0}$; all of these terms are in $L^2(\mathcal{M'})$.

Second, we can prove the regularity of the $n$th modes, solutions of (\ref{mode-n}) (we reintroduce here the dependence in $n$ for the solution of this system) for all $n \ge 1$, with respect to the $y$-differentiation. To this end, we infer from (\ref{moden}), (\ref{exist-sol}) and (\ref{conc-reg}) that $\psi_{ny}, v_{ny} \in \mathcal{C}^0([0,T]; L^2(\mathcal{M'}))$ (see (\ref{operator-A}) and (\ref{domain-A})). Then, the same process as for the zero mode can be carried out by simply differentiating (\ref{mode-n})$_1$ in $y$. This yields $u_{ny} \in \mathcal{C}^0([0,T]; L^2(\mathcal{M'}))$ resulting from $(\mathscr{H}_{xx})$. We also obtain by $(\mathscr{H}_{xx})$ that $\psi_{nxy}, v_{nxy}$ belong to $\mathcal{C}^0([0,T]; L^2(\mathcal{M'}))$. Similarly, $\psi_{nyy}, v_{nyy} \in \mathcal{C}^0([0,T]; L^2(\mathcal{M'}))$ thanks to (\ref{mode-n})$_{2,3}$ differentiated in $y$ and the previous observations (using $(\mathscr{H}), (\mathscr{H}_{x}), (\mathscr{H}_{xx})$). Finally, the regularity for $u_n$, namely $u_{nyy} \in \mathcal{C}^0([0,T]; L^2(\mathcal{M'}))$, is deduced from (\ref{mode-n})$_1$ as previously ({\it i.e.} writing the equation (\ref{mode-n})$_1$ differentiated twice in $y$ as a transport equation).

Before we continue with (\ref{a-regular})$_2$, we prove the regularity in $z$ and start by proving that
\begin{equation}\label{reg-U-lap}
\mathbb{U}_{zz} \in \mathcal{C}^0([0,T];  \mathbb{H}).
\end{equation}
To prove the condition in (\ref{reg-U-lap}), we consider the equations and boundary conditions satisfied by
$\mathbb{U}_{zz}$. By double differentiation in $z$, using the series expansions (\ref{modal_form}), and assuming enough regularity in $z$, we see that
$\mathbb{U}_{zz}$ satisfies the same boundary conditions as $\mathbb{U}$ at $z=0,-L_3$ and on the lateral boundary.
Furthermore  $\mathbb{U}_{zz}$ satisfies
\begin{equation}\label{mod-evolution}
\left\{\begin{array}{l}\displaystyle{\frac{d \mathbb{U}_{zz}}{d t} + \mathbb{A}^{\flat} \mathbb{U}_{zz}=\mathbb{F}_{zz},}\\
\mathbb{U}_{zz}(0)=\tilde{\mathbb{U}}_{zz},
\end{array}\right.
\end{equation}
where $\mathbb{A}^{\flat}=\mathbb{A}+(\mathbb{P}_{H_u}(\psi_{xz}(z=0)),\mathbb{P}_{H_v}(\psi_{yz}(z=0)),
\mathbb{P}_{H_{\psi}}(-N^2[u_{xz}(z=0)+
v_{yz}(z=0)]))^{t}I$, where $\mathbb{P}_{X}$ is the orthogonal projector from $L^2(\mathcal{M})$ onto $X$.\\
Now since the difference between $\mathbb{A}^{\flat}$ and $\mathbb{A}$ is
independent of $z$, this difference is in the kernel of
$\mathbb{P}_H=(\mathbb{P}_{H_u},\mathbb{P}_{H_v},\mathbb{P}_{H_{\psi}})^t$. Hence applying the operator $\mathbb{P}_H$ to
each side of (\ref{mod-evolution})$_1$, we find that
\begin{equation}\label{mod-evolution-bis}
\left\{\begin{array}{l}\displaystyle{\frac{d \mathbb{U}_{zz}}{d t} + \mathbb{A} \mathbb{U}_{zz}= \mathbb{F}_{zz},}\\
\mathbb{U}_{zz}(0)=\tilde{\mathbb{U}}_{zz},
\end{array}\right.
\end{equation}
which is again the same equation as (\ref{evolution}). Then the hypotheses $(\mathscr{H}_{zz}), (\mathscr{H}_{zzx}), (\mathscr{H}_{zzxx})$, imply all the above properties for $\mathbb{U}_{zz}$, namely (\ref{reg-U-lap}) and also
\begin{equation}\label{reg-U-zz}
\mathbb{U}_{tzz}, \mathbb{U}_{zz}, \mathbb{U}_{txzz}, \mathbb{U}_{xzz}, \mathbb{U}_{xxzz} \in \mathcal{C}^0([0,T]; L^2(\mathcal{M})^3).
\end{equation}
Hence (\ref{a-regular})$_4$ is proven. Also by interpolation in $z$ with the previous results, we see that
\begin{equation}\label{reg-U-z}
\mathbb{U}_{tz}, \mathbb{U}_{z}, \mathbb{U}_{txz}, \mathbb{U}_{xz}, \mathbb{U}_{xxz} \in \mathcal{C}^0([0,T]; L^2(\mathcal{M})^3).
\end{equation}
This proves (\ref{a-regular})$_3$.

We now complete the proof of (\ref{a-regular})$_2$. We need to show that the following derivatives belong to $\mathcal{C}^0([0,T]; L^2(\mathcal{M})^3)$:
\begin{displaymath}
\mathbb{U}_{xz}, \mathbb{U}_{yz}, \mathbb{U}_{zz}.
\end{displaymath}
The regularity of $\mathbb{U}_{zz}$ is provided by (\ref{reg-U-zz}), that of $\mathbb{U}_{xz}$ by (\ref{reg-U-z}). To prove that $\mathbb{U}_{yz} \in \mathcal{C}^0([0,T]; L^2(\mathcal{M})^3)$ we proceed by interpolation between $\mathbb{U}_{y} \in \mathcal{C}^0([0,T]; L^2(\mathcal{M})^3)$ which was provided by $(\mathscr{H}_{x})$ and $(\mathscr{H}_{tx})$ and $\mathbb{U}_{yzz} \in \mathcal{C}^0([0,T]; L^2(\mathcal{M})^3)$ which is provided by $(\mathscr{H}_{xxzz})$ and $(\mathscr{H}_{txzz})$.

Theorem \ref{a-prop} is proven.
\end{proof}

\section*{Acknowledgements.}
This work was supported in part by NSF Grants DMS 1510249, and by the Research Fund of Indiana University. Chang-Yeol Jung was supported under the framework of international  cooperation program managed by the National Research Foundation of Korea (2015K2A1A2070543) and supported by the National Research Foundation of Korea grant funded by the Ministry of Education (2015R1D1A1A01059837).

\end{document}